\documentclass[compress, preprint,9pt]{elsarticle}

\usepackage[a4paper, total={6in,8in}]{geometry}
\usepackage{amsthm,amssymb,amsmath,graphicx,amscd,mathrsfs}
\usepackage{color,xcolor,comment,dsfont,pifont,enumitem}
\usepackage{longtable,graphicx,float,amsfonts,setspace}
\usepackage{hyperref,multirow,placeins,algorithm,algorithmicx}

\numberwithin{equation}{section}

\newtheorem{lemma}{Lemma}[section]
\newtheorem{defi}{Definition}[section]
\newtheorem{theo}{Theorem}[section]
\newtheorem{coro}{Corollary}[section]
\newtheorem{example}{Example}[section]
\newtheorem{remark}{Remark}[section]

\def\b0{\boldsymbol{0}}
\newcommand{\E}{\mathcal{E}}
\newcommand{\T}{{\mathcal{T}}}
\newcommand{\Q}{{\mathcal{Q}}}

\newcommand{\sumT}{\sum_{T\in\mathcal{T}_h}}     %new
\newcommand{\bn}{\mathbf{n}}

\newcommand{\di}{\text{div}}
\newcommand{\eps}{\varepsilon}

\newcommand{\A}{\mathcal{A}}
\newcommand{\Real}{\mathbb{R}}

\newcommand{\la}{\langle}
\newcommand{\ra}{\rangle_{\partial T}}
\newcommand{\ran}{\rangle}
\allowdisplaybreaks[4]

\allowdisplaybreaks % For long formula

\begin{document}

\begin{frontmatter}

\title{
	The weak Galerkin method for a class of Gross-Pitaevskii type eigenvalue problems
	}

\author[mymainaddress]{Wei Lu}
\ead{luwei24@mails.jlu.edu.cn}

%\author[mysecondaddress]{Hehu Xie}
%\ead{hhxie@lsec.cc.ac.cn}

\author[mymainaddress]{Qilong Zhai\corref{mycorrespondingauthor}}
\cortext[mycorrespondingauthor]{Corresponding author}
\ead{zhaiql@jlu.edu.cn}

\address[mymainaddress]{School of Mathematics, Jilin University, Changchun 130012, Jilin, China} 
\address[mysecondaddress]{LSEC and Institute of Computational
	Mathematics and Scientific/Engineering Computing, Academy of
	Mathematics and Systems Science, Chinese Academy of Sciences,
	Beijing 100190, China}
%\address[mythirdaddress]{School of Mathematics, Jilin University, Changchun 130012, Jilin, P. R. China} 
\begin{abstract}
	This paper aims to employ the weak Galerkin method to solve a class of nonlinear eigenvalue problems. We proved the weak Galerkin scheme produces lower bound for the energy. Moreover, by the post-processing technique, we obtain lower bound for the ground state eigenvalue. Finally, numerical experiments are provided to validate the theoretical analysis.
\end{abstract}

\begin{keyword}
%% keywords here, in the form: keyword \sep keyword
 
%% PACS codes here, in the form: \PACS code \sep code

%% MSC codes here, in the form: \MSC code \sep code
%% or \MSC[2008] code \sep code (2000 is the default)

Weak Galerkin method, Nonlinear eigenvalue problem, Error analysis, Lower bounds.

\MSC[2008] 65N15 \sep 65N25 \sep 65N30.

\end{keyword}

\end{frontmatter}

 %%\linenumbers

%% main text

\section{Preliminaries}In this section, we state some notations and preliminaries. 

The standard Sobolev space notations are used in this paper. Let $\Omega\subset\Real^2$ be a bounded domain with $\partial\Omega$, and $H^m(\Omega)$ be the Sobolev space. The notations $(\cdot, \cdot)_{m,D}$ and $||\cdot||_{m,D}$ are used as inner-product and norms on $H^m(D)$, if the region $D$ is an edge of some elements, we use $\langle\cdot,\cdot\rangle _{m,D}$ instead of $(\cdot,\cdot)_{m,D}$. For simplicity, We shall drop the subscript when $m=0$ or $D=\Omega$.

Consider the following variational models of the form
\begin{eqnarray}\label{1.1}
	\inf\{E(v):v\in H_0^1(\Omega),\displaystyle\int_\Omega v^2 d\Omega=1\},
\end{eqnarray}
where $\Omega\subset\Real^2$ is a convex bounded domain, and the energy function $E$ is defined as
\begin{eqnarray}\label{E}
	E(v)=\frac12 a(v,v)+\frac12\displaystyle\int_\Omega F(v^2)d\Omega,
\end{eqnarray}
where
\begin{eqnarray*}
	a(u,v)=(\A\nabla u,\nabla v)+(Vu,v),\quad\forall u,v\in H^1(\Omega),
\end{eqnarray*}
with $\A(x)\in (L^\infty(\Omega))^{2\times2}$ is symmetric, and 
%$V(x)\in L^p(\Omega)(1<p\le\infty)$
$V(x)\in L^2(\Omega)$ is the non-negative potential.

Next, we introduce the following assumptions.

\textbf{Assumption (A1).}\emph{There exists a constant $\alpha$ such that $\displaystyle\inf_{x\in \Omega}\xi^T\A(x)\xi\ge\alpha\xi^T\xi$, $\forall \xi\in R^2$, $\forall x\in\Omega$ a.e..
}

\textbf{Assumption (A2).}\emph{$F(t)\in C^1[0,+\infty)\cap C^2(0,+\infty)$, and $\displaystyle\inf_{t>0}F''(t)>0$.
}

\textbf{Assumption (A3).}\emph{There exists $q\in [0,2)$ and a constant $C$ such that $|F'(t)|\le C(1+t^q)$, $\forall t\ge0$.
}

\textbf{Assumption (A4).}\emph{$F''(t)t$ locally bounded in $[0,+\infty)$.
}

\textbf{Assumption (A5).}\emph{There exists $1<r\le 2$ and $0\le s\le 3-r$ such that: $\forall R>0$, there holds $\forall t_1\in(0,R)$ and $\forall t_2\in\Real$ that
	\begin{eqnarray*}
		|F'(t_2^2)t_2-F'(t_1^2)t_2-2F''(t_1^2)t^2(t_2-t_1)|\le C(1+|t_2|^s)|t_2-t_1|^r,
	\end{eqnarray*}
	where C is a positive constant depend on R.
}

For simplicity, we denote by $f(t)=F'(t)$. Then, under Assumption (A2), $E$ is twice differentiable on $H^1(\Omega)$ and we have
\begin{eqnarray*}
	E'(v)=A^vv,\quad \la E''(u)v,w\ran=\la A^uv,w\ran+2(f'(u^2)u^2v,w),\quad\forall u,v,w\in H^1(\Omega),
\end{eqnarray*}
where
\begin{eqnarray*}
	A^vw=-\di(\A\nabla\cdot w)+Vw+f(v^2)w,\quad\forall v,w\in H^1(\Omega).
\end{eqnarray*}
It is easy to check, for each $u\in L^2(\Omega)$, $A^u$ defines a self-adjoint operator and satisfies
\begin{eqnarray*}
	\la A^uv,w\ran=(\A\nabla v,\nabla w)+(Vv,w)+(f(u^2)v,w),\quad\forall v,w\in H_0^1(\Omega).
\end{eqnarray*}

\begin{lemma}\label{lem1}
	Under Assumptions (A1)-(A3), \eqref{1.1} has a unique (up to sign) minimizer $u$. Moreover, there exists $\lambda\in\Real$ such that $(u,\lambda)$ is the first eigenpair of the following nonlinear eigenvalue problem:
	\begin{equation}\label{1.3}
		\left\{\begin{array}{rcl}
			\la A^uu,v\ran&=&\lambda (u,v),\quad\forall v\in H_0^1(\Omega),\\
			\|u\| &=&1.
		\end{array}\right.
	\end{equation}
	Besides, $u\in C(\bar{\Omega})\cap L^\infty(\Omega)$ is positive in $\Omega$, and $\lambda$ is a simple eigenvalue of $A^u$.
\end{lemma}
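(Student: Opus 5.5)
This is the standard result of Cancès, Chakir and Maday for Gross--Pitaevskii type functionals. I would follow their argument in four steps: existence by the direct method, uniqueness via a convexity trick in the density $\rho=v^2$, the Euler--Lagrange equation, and finally regularity, positivity and simplicity.

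\textbf{Existence.} I would use the direct method of the calculus of variations.
\begin{itemize}
\item Since $F$ is convex with $F\in C^1[0,\infty)$, it is bounded below by an affine function, $F(t)\ge F(0)+F'(0)t$. Hence on the constraint set $\|v\|=1$ the nonlinear term is bounded below.
\item Assumption (A1) and $V\ge0$ then give coercivity: $E(v)\ge \frac{\alpha}{2}\|\nabla v\|^2 - C$.
\item Take a minimizing sequence. It is bounded in $H_0^1(\Omega)$, so a subsequence converges weakly in $H_0^1$ and strongly in $L^p$ for all $p<\infty$ (Rellich, $d=2$). The constraint $\|v\|=1$ passes to the limit.
\item Weak lower semicontinuity handles the quadratic part $(\A\nabla v,\nabla v)$. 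For the potential, $(Vv,v)$ converges because $V\in L^2$ and $v_n^2\to v^2$ in $L^2$.
\item Assumption (A3) gives $|F(t)|\le C(1+t^{q+1})$ with $q<2$, so $F(v^2)$ has growth $|v|^{2q+2}$ with $2q+2<6$. Strong $L^p$ convergence then lets $\int F(v_n^2)$ converge.
\end{itemize}

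\textbf{Uniqueness up to sign.} I would pass to the density variable.
\begin{itemize}
\item Since $E(|v|)=E(v)$, any minimizer may be replaced by a nonnegative one.
\item Write $\rho=v^2$ and $\tilde E(\rho)=E(\sqrt\rho)$ on the convex set $\{\rho\ge0,\ \sqrt\rho\in H_0^1,\ \int\rho=1\}$.
\item The map $\rho\mapsto\int \A\nabla\sqrt\rho\cdot\nabla\sqrt\rho$ is convex. This follows from the convexity of $(a,b)\mapsto b^TAb/a$ with $b=\nabla\rho/2$.
\item The map $\rho\mapsto\int V\rho$ is linear, and $\rho\mapsto\int F(\rho)$ is strictly convex by (A2).
\item Hence $\tilde E$ is strictly convex, and the nonnegative minimizer is unique. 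If $u$ is any minimizer, $|u|$ is the unique nonnegative one. Once positivity is known, $u$ cannot change sign, which gives $u=\pm|u|$.
\end{itemize}

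\textbf{Euler--Lagrange equation.} Differentiate $E$ on the constraint manifold, using $E'(v)=A^vv$. The Lagrange multiplier theorem then gives $\la A^uu,v\ran=\lambda(u,v)$ for all $v\in H_0^1(\Omega)$, which is \eqref{1.3}. Here the differentiability of $\int F(v^2)$ on $H^1$ again comes from (A3) and Sobolev embeddings.

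\textbf{Regularity, positivity and simplicity.} This is the main obstacle, and it comes in three parts.
\begin{enumerate}
\item \emph{Boundedness and continuity.} Write the equation as $-\di(\A\nabla u)=(\lambda - V - f(u^2))u$. The right-hand side lies in some $L^p$ with $p>1$, using (A3) and $u\in L^r$ for all $r$. Moser/De Giorgi iteration (Stampacchia-type estimates in 2D) then gives $u\in L^\infty$, and De Giorgi--Nash gives $u\in C(\bar\Omega)$.
\item \emph{Positivity.} The coefficient $c=V+f(u^2)-\lambda$ is such that $u\ge0$ solves $-\di(\A\nabla u)+cu=0$, with $c$ bounded above in a suitable $L^p$. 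The weak Harnack inequality then gives $u>0$ in $\Omega$.
\item \emph{Simplicity.} $u$ is a positive eigenfunction of the linear self-adjoint operator $A^u$, which has compact resolvent and a potential in $L^p$. A positive eigenfunction must be associated with the ground state of such an operator, so $\lambda$ is the first eigenvalue of $A^u$. That ground state is simple by the Krein--Rutman / Courant argument: any minimizer $w$ of the Rayleigh quotient gives $|w|$ as a minimizer, hence $|w|>0$ by Harnack, so eigenfunctions cannot change sign. Two orthogonal ones are then impossible.
\end{enumerate}
The delicate point is justifying Harnack with the merely integrable potential $V\in L^2$. In two dimensions $L^2$ exceeds the threshold $L^{d/2+\epsilon}$, so the Trudinger theory should apply.
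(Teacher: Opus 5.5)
Your proposal is correct and is essentially the argument of Lemma 2 in Canc\`es--Chakir--Maday, which the paper simply cites rather than reproducing. That argument runs: the direct method for existence, strict convexity of $\rho\mapsto E(\sqrt{\rho})$ for uniqueness of the nonnegative minimizer, the Euler--Lagrange equation via a Lagrange multiplier, De Giorgi--Nash--Moser regularity, Trudinger's weak Harnack inequality for positivity (applicable since $V\in L^2$ and the two-dimensional threshold is $L^p$ with $p>1$), and the positive-eigenfunction argument for simplicity. One point worth stating explicitly: concluding $u=\pm|u|$ for an \emph{arbitrary} minimizer uses the continuity of that minimizer, which your regularity step supplies because every minimizer satisfies the Euler--Lagrange equation.
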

\begin{proof}
	See Lemma 2 in \cite{Eric2009}.
\end{proof}

\section{Weak Galerkin discretization}In this section, we introduce the standard WG scheme for nonlinear eigenvalue problem \eqref{1.3}.

Let $\T_h$ be a partition of the domain $\Omega$, and the elements
in $\T_h$ are polygons satisfying the regular assumptions specified
in \cite{Wang2014a}. Let $\E_h$ be the edges in $\T_h$, and
$\E_h^0$ denotes by the interior edges $\E_h \backslash\partial\Omega$. For each element $T\in\T_h$, $h_T$ represents the diameter of $T$, and $h=\max\limits_{T\in\T_h} h_T$ denotes the mesh size. For simplicity, we use $a\lesssim b$  and $a\gtrsim b$ to represent $a\le Cb$ and $a\ge Cb$, respectively, where $C$ is a constant independent of mesh size $h$.

%According to \cite{Wang2013a}, the following trace inequality and inverse inequality hold true.
\begin{lemma}
	For any element  $T\in\T_h$, there holds the trace inequality
	\begin{eqnarray}\label{trace}
		\|v\|_{\partial T}^2\lesssim h_T^{-1}\|v\|_T^2+h_T\|\nabla v\|_T^2,\quad\forall v\in h^1(T).
	\end{eqnarray}
\end{lemma}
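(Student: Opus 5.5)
We prove the trace inequality \eqref{trace} by the standard divergence-theorem argument on a polygon, using the shape regularity of $\T_h$ from \cite{Wang2014a}. The regularity assumptions give, for each $T\in\T_h$, a subdivision of $T$ into triangles that share a common point $x_T$ and have comparable size. Concretely, we take $T$ star-shaped with respect to a ball $B(x_T,\rho h_T)$ with $\rho$ independent of $h$. The plan is to first prove the estimate edge by edge on the triangle $S_e=\mathrm{conv}(e\cup\{x_T\})$ for each edge $e\subset\partial T$, and then sum over edges. Since the number of edges per element is uniformly bounded, summing only affects the constant. By density it suffices to treat smooth $v$.

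\textbf{Vector field.} For a fixed edge $e$ of $T$, set $\mathbf{w}(x)=x-x_T$. On $e$ we have $\mathbf{w}\cdot\bn_e=\mathrm{dist}(x_T,\text{line of }e)\gtrsim h_T$, by the regularity (chunkiness) assumption. On the other two sides of $S_e$, which pass through $x_T$, we have $\mathbf{w}\cdot\bn=0$. Moreover, on $S_e$,
\[
|\mathbf{w}|\lesssim h_T, \qquad \nabla\cdot\mathbf{w}=2 .
\]

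\textbf{Divergence identity.} Apply the divergence theorem on $S_e$ to $v^2\mathbf{w}$:
\[
\int_e v^2\,\mathbf{w}\cdot\bn_e\,ds=\int_{S_e}\big(2v^2+2v\nabla v\cdot\mathbf{w}\big)\,dx .
\]
Using the lower bound on $\mathbf{w}\cdot\bn_e$ on the left and $|\mathbf{w}|\lesssim h_T$ on the right gives
\[
h_T\|v\|_e^2\lesssim \|v\|_{S_e}^2+h_T\|v\|_{S_e}\|\nabla v\|_{S_e} .
\]
Young's inequality bounds the cross term by $\tfrac12\|v\|_{S_e}^2+\tfrac12 h_T^2\|\nabla v\|_{S_e}^2$. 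Dividing by $h_T$ yields
\[
\|v\|_e^2\lesssim h_T^{-1}\|v\|_{S_e}^2+h_T\|\nabla v\|_{S_e}^2 .
\]

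\textbf{Summation.} Sum over all edges $e\subset\partial T$. The triangles $S_e$ have disjoint interiors and their union is $T$, which gives \eqref{trace}.

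\textbf{Main obstacle.} The only delicate point is the uniform lower bound $\mathbf{w}\cdot\bn_e\gtrsim h_T$. This needs the distance from $x_T$ to each edge line to be comparable to $h_T$, and that is exactly what the shape-regularity assumptions of \cite{Wang2014a} provide. These assumptions also give a bounded number of edges per element and edges of length comparable to $h_T$. If that form of the assumption were unavailable, the fallback would be a scaling argument: prove the standard trace inequality on each shape-regular sub-triangle of $T$ via an affine map to a reference triangle, then sum.
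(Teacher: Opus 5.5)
Your proof is correct and is essentially the argument behind the cited Lemma A.3 of \cite{Wang2014a}: apply the divergence theorem to $v^2(x-A_e)$ on a triangle $P(e,T,A_e)\subset T$ with base $e$ and height $\gtrsim h_T$, where only the base $e$ contributes to the boundary integral. One adjustment is needed: the shape-regularity assumption there gives an edge-dependent apex $A_e$, not a common center $x_T$ (elements need not be star-shaped with respect to a ball), so take $S_e=\mathrm{conv}(e\cup\{A_e\})$ and replace the tiling step by $\sum_{e\subset\partial T}\|v\|_{S_e}^2\le N\|v\|_T^2$, where $N$ is the uniformly bounded number of edges per element.
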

\begin{proof}
	See Lemma A.3 in \cite{Wang2014a}.
\end{proof}

%\begin{lemma}
%	For any element $T\in\T_h$, there holds the inverse inequality
%	\begin{eqnarray}\label{inverse}
	%		\|\nabla v\|_T\lesssim h_T^{-1}\|v\|_T,\quad\forall v\in P_k(T).
	%	\end{eqnarray}
%\end{lemma}

Then, we introduce the WG discretization for the eigenvalue problem \eqref{1.3}. For a given integer $k\ge1$, the WG finite element space is defined by
\begin{eqnarray*}
	V_h= \big\{ v=\{ v_0, v_b\}: v_0|_T\in  P_k(T),  v_b|_e\in  P_{k-1}(e),\ \forall T\in\T_h,\ e\in\E_h,\text{ and }  v_b|_{\partial\Omega}= 0\big\},
\end{eqnarray*}

%Define the sum space $V=V_h+ H_0^1(\Omega)$. For each $ v\in V$, we define its weak gradient $\nabla_w v$ as follows.
% \begin{defi}
	%	$\nabla_w v|_T$ is the unique polynomial in $[P_{k-1}(T)]^{2}$ satisfying
	%	 \begin{eqnarray*}
		%		(\nabla_w v,q)_T=-(v_0,\nabla\cdot q)_T+\la v_b,q\cdot\n
		%		\ra,\quad\forall q\in [P_{k-1}(T)]^{2},
		%	\end{eqnarray*}
	%	where $\n$ denotes the outward unit normal vector.
	%\end{defi}
	
	For each $ v_h\in V_h$, we define its weak gradient $\nabla_w v_h$ as follows.
	\begin{defi}
		For each $ v_h\in V_h$, $\nabla_w v_h|_T$ is the unique polynomial in $[P_{k-1}(T)]^{2}$
		satisfying
		\begin{eqnarray*}
			(\nabla_w v_h,q)_T=-(v_0,\nabla\cdot q)_T+\la v_b,q\cdot\bn
			\ra,\quad\forall q\in [P_{k-1}(T)]^2,
		\end{eqnarray*}
		where $\bn$ denotes the outward unit normal vector.
	\end{defi}
	
	For each $T\in\T_h$, let $Q_0$ denotes the $L^2$ projection from $L^2(T)$ onto $ P_k(T)$, $\Q_h$ denotes the $L^2$ projection from $[L^2(T)]^{2}$ onto $[P_{k-1}(T)]^{2}$.
	For each $e\in\E_h$, let $Q_b$ denotes the $L^2$ projection from $L^2(e)$ onto $ P_{k-1}(e)$ for each $e\in\E_h$.
	Combining $Q_0$ and $Q_b$ together, we define $Q_h=\{Q_0,Q_b\}$,
	which is a projection onto $V_h$.
	
	For the projection operators defined above, we have the following results.
	\begin{lemma}\label{lem3}
		There holds the following commutative property on each $T\in\T_h$:
		\begin{eqnarray*}
			\nabla_wQ_hv=\Q_h\nabla v, \quad\forall v\in H^1(T).
		\end{eqnarray*}
	\end{lemma}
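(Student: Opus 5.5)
Both sides of the identity are polynomials in $[P_{k-1}(T)]^2$: the left side by the definition of the weak gradient, and the right side by the definition of $\Q_h$. So it suffices to show that they have the same $L^2(T)$ inner product with every test function $q\in[P_{k-1}(T)]^2$. Fix $v\in H^1(T)$ and such a $q$. The argument unwinds the definition of $\nabla_w$ applied to $Q_hv=\{Q_0v,Q_bv\}$, removes the projections using the polynomial degrees of $\nabla\cdot q$ and $q\cdot\bn$, and then integrates by parts.

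First, by the definition of the weak gradient,
\begin{eqnarray*}
(\nabla_w Q_hv,q)_T=-(Q_0v,\nabla\cdot q)_T+\la Q_bv,q\cdot\bn\ra.
\end{eqnarray*}
Next, $\nabla\cdot q\in P_{k-2}(T)\subset P_k(T)$, so by the definition of the $L^2$ projection $Q_0$ we have $(Q_0v,\nabla\cdot q)_T=(v,\nabla\cdot q)_T$.

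For the boundary term, the point to check is that $q\cdot\bn$ restricted to each edge $e\subset\partial T$ lies in $P_{k-1}(e)$. This holds because the elements are polygons, so $\bn$ is constant on each edge, and $q$ has degree at most $k-1$. Hence $\la Q_bv,q\cdot\bn\rangle_e=\la v,q\cdot\bn\rangle_e$ edge by edge, where the trace of $v\in H^1(T)$ is in $L^2(e)$. One consistency remark: $Q_bv$ here is computed from the trace of $v$ from $T$, which is how the identity should be read locally on $T$.

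Combining these two steps gives
\begin{eqnarray*}
(\nabla_w Q_hv,q)_T=-(v,\nabla\cdot q)_T+\la v,q\cdot\bn\ra.
\end{eqnarray*}
Green's formula on the Lipschitz polygon $T$, valid for $v\in H^1(T)$ and smooth $q$, turns the right-hand side into $(\nabla v,q)_T$. By the definition of $\Q_h$, this equals $(\Q_h\nabla v,q)_T$. Since $q$ was arbitrary and both sides are in $[P_{k-1}(T)]^2$, the identity $\nabla_wQ_hv=\Q_h\nabla v$ follows.

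The computation is routine throughout. The only step needing care is the degree check $q\cdot\bn|_e\in P_{k-1}(e)$, which depends on the edges being straight.
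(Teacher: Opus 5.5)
Your proof is correct and is the standard argument behind the paper's proof, which just cites Lemma 5.1 of \cite{Mu2013c}. There too one tests against $q\in[P_{k-1}(T)]^2$, removes $Q_0$ and $Q_b$ using $\nabla\cdot q\in P_{k-2}(T)$ and $q\cdot\bn|_e\in P_{k-1}(e)$, and then integrates by parts.
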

	\begin{proof}
		See Lemma 5.1 in \cite{Mu2013c}.
	\end{proof}
	
	%Refer to \cite{Mu2013c}, the following estimates hold true for the projection operators.
	\begin{lemma}
		For any $v\in H^{k+1}(\Omega)$ and $\tau\in [H^k(\Omega)]^2$, $s=0,1$, we have
		\begin{eqnarray}
			\sumT h^{2s}\|v-Q_hv\|_{s,T}^2&\lesssim h^{2(k+1)}\|v\|_{k+1}^2,\label{pro1}\\
			\sumT h^{2s}\|\tau-\Q_h\tau\|_{s,T}^2&\lesssim h^{2k}\|\tau\|_k^2.\label{pro2}
		\end{eqnarray}
	\end{lemma}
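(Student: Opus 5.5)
The plan is to prove both estimates elementwise and then sum, using only the local approximation properties of $L^2$ projections onto polynomial spaces on shape-regular polygons. The key observation is that on each $T$, $v-Q_hv$ (meaning $v-Q_0v$ on the interior of $T$; the edge part $Q_b$ plays no role in the $H^s(T)$ norm) and $\tau-\Q_h\tau$ are errors of $L^2$ projections onto $P_k(T)$ and $[P_{k-1}(T)]^2$. These projections reproduce polynomials of the respective degrees.

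\textbf{Step 1: the $L^2$ parts ($s=0$).} For $s=0$ we use the best-approximation property of the $L^2$ projection:
\[
\|v-Q_0v\|_T=\inf_{p\in P_k(T)}\|v-p\|_T,\qquad \|\tau-\Q_h\tau\|_T=\inf_{q\in[P_{k-1}(T)]^2}\|\tau-q\|_T .
\]
Under the regularity assumptions on $\T_h$ from \cite{Wang2014a} (each $T$ is star-shaped with respect to a ball of radius comparable to $h_T$), the Bramble--Hilbert lemma with an averaged Taylor polynomial gives
\[
\inf_{p}\|v-p\|_T\lesssim h_T^{k+1}|v|_{k+1,T},\qquad \inf_{q}\|\tau-q\|_T\lesssim h_T^{k}|\tau|_{k,T}.
\]

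\textbf{Step 2: the $H^1$ parts ($s=1$).} Let $p\in P_k(T)$ be the averaged Taylor polynomial of $v$. Split
\[
v-Q_0v=(v-p)+Q_0(p-v).
\]
The first term is bounded in $H^1(T)$ by $h_T^{k}|v|_{k+1,T}$, again by Bramble--Hilbert. For the second term, apply the polynomial inverse inequality $\|\nabla w\|_T\lesssim h_T^{-1}\|w\|_T$ for $w\in P_k(T)$, together with the $L^2$-stability of $Q_0$:
\[
\|\nabla Q_0(p-v)\|_T\lesssim h_T^{-1}\|p-v\|_T\lesssim h_T^{k}|v|_{k+1,T}.
\]
Multiplying by $h_T^2\le h^2$ gives $h^{2}\|v-Q_0v\|_{1,T}^2\lesssim h^{2(k+1)}|v|_{k+1,T}^2$. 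The same argument, with degree $k-1$ and regularity $k$, handles $\tau$.

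\textbf{Step 3: summation.} Summing the squared local bounds over $T\in\T_h$ and using $h_T\le h$ yields \eqref{pro1} and \eqref{pro2}, since $\sumT|v|_{k+1,T}^2\le\|v\|_{k+1}^2$.

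The main obstacle is justifying the inverse inequality and the Bramble--Hilbert constants uniformly on general polygons rather than simplices. I would handle this via the shape-regularity assumptions of \cite{Wang2014a}: each polygon admits a sub-triangulation into shape-regular triangles, and is star-shaped with respect to a ball of radius $\gtrsim h_T$. The inverse inequality then follows by scaling on each sub-triangle, and the Bramble--Hilbert estimate on star-shaped domains holds with constants depending only on the chunkiness parameter. Both are stated as Lemma A.x in \cite{Wang2014a}, and one could alternatively cite \cite{Mu2013c} directly for these projection estimates.
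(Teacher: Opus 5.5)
The paper gives no argument of its own for this lemma; it defers to Lemma 4.1 of \cite{Wang2014a}. Your analytic skeleton is the standard proof of that lemma: $L^2$ best approximation, the splitting $v-Q_0v=(v-p)+Q_0(p-v)$, $L^2$-stability of $Q_0$ plus an inverse inequality for the $H^1$ part, and summation over elements. Every estimate in the chain is correct, provided a local Bramble--Hilbert bound and an inverse inequality with $h$-independent constants hold on each $T$.

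The gap is in the geometric input, which does not match the paper's mesh class. The regularity assumptions of \cite{Wang2014a} are not uniform star-shapedness with respect to a ball of radius $\gtrsim h_T$, nor a shape-regular sub-triangulation. As I recall them, they consist of:
\begin{itemize}
\item lower bounds $|T|\gtrsim h_T^2$ and $|e|\gtrsim h_T$;
\item a pyramid (cone) condition attached to each edge;
\item a shape-regular \emph{circumscribed} triangle $S(T)\supset T$ with $h_{S(T)}\lesssim h_T$, each $S(T)$ meeting only a bounded number of the others.
\end{itemize}
These conditions admit non-star-shaped elements, for example a U-shaped polygon whose arms and notch all have size comparable to $h_T$. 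So the averaged-Taylor-polynomial Bramble--Hilbert estimate on $T$, and your attribution of it to \cite{Wang2014a}, are not justified under the paper's hypotheses.

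The fix is to do the polynomial approximation on $S(T)$ instead:
\begin{itemize}
\item Extend $v$ to $\tilde v\in H^{k+1}(\Real^2)$ with $\|\tilde v\|_{k+1,\Real^2}\lesssim\|v\|_{k+1}$. This is possible because $\Omega$ is convex, hence Lipschitz, and it is needed because $S(T)$ may leave $\Omega$.
\item Take $p\in P_k$ to be a Bramble--Hilbert polynomial for $\tilde v$ on the shape-regular triangle $S(T)$. Then $\|v-p\|_{j,T}\le\|\tilde v-p\|_{j,S(T)}\lesssim h_T^{k+1-j}|\tilde v|_{k+1,S(T)}$ for $j=0,1$.
\item Get the inverse inequality on $T$ from the one on $S(T)$ together with $\|\phi\|_{S(T)}\lesssim\|\phi\|_T$ for polynomials $\phi$ of bounded degree. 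The latter holds because $|T|\gtrsim|S(T)|$, by a Remez-type or compactness argument on the reference triangle.
\end{itemize}
Your splitting then goes through unchanged. The summation uses the finite overlap of the $S(T)$ and the extension bound, $\sum_T|\tilde v|_{k+1,S(T)}^2\lesssim\|\tilde v\|^2_{k+1,\Real^2}\lesssim\|v\|^2_{k+1}$, instead of $\sum_T|v|^2_{k+1,T}=|v|^2_{k+1}$. The same modification handles $\tau$, with degree $k-1$ and regularity $k$.

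Alternatively, you can keep your proof and impose star-shapedness with respect to a ball of radius $\gtrsim h_T$ as an explicit additional mesh assumption. That version is cleaner, with no extension operator and a purely local bound in $|v|_{k+1,T}$, but it is a different hypothesis from the one the paper uses.
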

	\begin{proof}
		See Lemma 4.1 in \cite{Wang2014a}.
	\end{proof}
	
	In order to give the WG scheme, we define two bilinear forms on $V_h$.
	\begin{eqnarray}
		s( v_h, w_h)=&\sumT h_T^{-1+\eps}\langle  Q_bv_0- v_b, Q_bw_0- w_b\rangle_{\partial T},\label{2.1}\\
		a_h( v_h, w_h)=&(\A\nabla_wv_h,\nabla_ww_h)+(Vv_h,w_h)+s(v_h,w_h),\label{2.2}
	\end{eqnarray}
	where $v_h=\{v_0,v_b\}$, $w_h=\{w_0,w_b\}\in V_h$, $\eps\in(0,1)$ is a small constant.
	
	In addition, for each $u_h\in V_h$, we introduce the operator $A_h^{u_h}$ by
	\begin{eqnarray}\label{2.3}
		\la A_h^{u_h}v_h, w_h\ran=a_h( v_h, w_h)+(f(u_h^2)v_h,w_h), \quad\forall v_h,w_h\in V_h,
	\end{eqnarray}
	and the discrete energy function is given by
	\begin{eqnarray}\label{Eh}
		E_h(v_h)=\frac12 a_h(v_h,v_h)+\frac12\displaystyle\int_\Omega F(v_h^2)d\Omega, \quad\forall v_h\in V_h.
	\end{eqnarray}
	Similarly, $E_h$ is twice differentiable on $V_h$ and satisfies
	\begin{eqnarray}\label{E''h}
		\la E_h''(u_h)v_h,w_h\ran=\la A_h^{u_h}v_h,w_h\ran+2(f'(u_h^2)u_h^2v_h,w_h),\quad\forall u_h, v_h,w_h\in V_h.
	\end{eqnarray}
	
	Now, we are ready to give the weak Galerkin scheme for the nonlinear eigenvalue problem \eqref{1.3}.
	\begin{algorithm}[ht!]
		\caption{The weak Galerkin scheme for \eqref{1.3}.}
		Find $ u_h\in V_h$ and $\lambda_h\in\Real$ such that $\| u_h\|=1$ and
		\begin{eqnarray}\label{2.4}
			\la A_h^{u_h} u_h, v_h\ran=\lambda_h(u_h,v_h),\quad\forall  v_h\in V_h.
		\end{eqnarray}
	\end{algorithm}

\section{Error analysis}In this section, we show the error analysis for the WG scheme \eqref{2.4}.

For the aim of analysis, we introduce
\begin{eqnarray*}
	\|v_h\|_{1,h}^2=\sumT\|\nabla v_0\|_T^2+\sumT h_T^{-1}\|  Q_bv_0- v_b\|_{\partial T}^2,\quad\forall v_h\in V_h.
\end{eqnarray*}
It is easy to check $\|\cdot\|_{1,h}$ defines a norm on $V_h$ \cite{Zhai2019c,Wang2017b}.

Consider the following auxiliary problem: Find $\widetilde{u}_h\in V_h$ and $\widetilde{\lambda}_h\in\Real$ such that
\begin{eqnarray}\label{3.3}
	\la A_h^{u} \widetilde{u}_h, v_h\ran=\widetilde{\lambda}_h(\widetilde{u}_h,v_h),\quad\forall  v_h\in V_h,
\end{eqnarray}
where $u$ is the non-negative minimizer of \eqref{1.1}. Then, we have the following estimates.
\begin{lemma}\label{lem7}
	Let $(\widetilde{u}_h,\widetilde{\lambda}_h)$ be the first eigenpair of \eqref{3.3}. Suppose the ground state $u\in H^{k+1}(\Omega)$, then we have
	\begin{eqnarray}
		h^{2k}\|u\|_{k+1}\lesssim\lambda-\widetilde{\lambda}_h&\lesssim h^{2k-2\eps}\|u\|_{k+1},\label{3.4}
		%		|\lambda-\widetilde{\lambda}_h|&\lesssim h^{2k-2\eps}\|u\|_{k+1},\label{3.4}
		\\
		\|u-\widetilde{u}_h\|_{1,h}&\lesssim h^{k-\eps}\|u\|_{k+1},\label{3.5}
		\\
		\|u-\widetilde{u}_h\|&\lesssim h^{k+1-\eps}\|u\|_{k+1},\label{3.6}
	\end{eqnarray}
	when the mesh size h is sufficiently small.
\end{lemma}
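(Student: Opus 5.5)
The auxiliary problem \eqref{3.3} is a \emph{linear} symmetric eigenvalue problem, because the nonlinearity is frozen at the exact ground state $u$. Moreover, by Lemma \ref{lem1}, $\lambda$ is the simple first eigenvalue of the linear self-adjoint operator $A^u$, and $u$ is its eigenfunction. So I would treat \eqref{3.3} as the WG discretization of the linear problem $\la A^u w,v\ran=\mu(w,v)$, with potential $V+f(u^2)$. This potential lies in $L^2$, and in fact $f(u^2)\in L^\infty$ since $u\in L^\infty$ and $f$ is continuous. The proof would then follow the standard lower-bound WG eigenvalue argument (in the spirit of Zhai et al.), in three steps.

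\textbf{Step 1: source problem.} I would first derive the energy and $L^2$ estimates for the WG source problem associated with $a_h(\cdot,\cdot)+(f(u^2)\cdot,\cdot)$. Coercivity in $\|\cdot\|_{1,h}$ follows from (A1), the nonnegativity of $V$ and $f(u^2)$ (or a shift argument), and the standard equivalence $\|\nabla_w v\|^2+s(v,v)\gtrsim\|v\|_{1,h}^2$ for $h$ small. For the consistency error, I would use Lemma \ref{lem3} to write $(\A\nabla_w Q_hu,\nabla_w v)=(\Q_h\A\nabla u,\nabla_w v)+\cdots$, integrate by parts elementwise, and bound the edge residual by the trace inequality \eqref{trace} together with \eqref{pro1}--\eqref{pro2}. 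The stabilizer contributes
\[
s(Q_hu,v)\le s(Q_hu,Q_hu)^{1/2}s(v,v)^{1/2},
\]
and because of the weight $h_T^{-1+\eps}$ in $s$ the edge residual term only balances against $s(v,v)^{1/2}$ with a loss of $h^{-\eps/2}$. The resulting energy estimate is $h^{k-\eps}\|u\|_{k+1}$. A duality (Aubin--Nitsche) argument, using convexity of $\Omega$ for $H^2$ regularity of the dual problem, gains one further power of $h$, up to the $\eps$ loss.

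\textbf{Step 2: eigenvalue convergence.} With the source estimates in hand, I would apply the Babu\v{s}ka--Osborn spectral approximation theory to the discrete solution operators. Simplicity of $\lambda$ (Lemma \ref{lem1}) guarantees that, for small $h$, the first discrete eigenpair approximates $(u,\lambda)$. This yields \eqref{3.5} and \eqref{3.6}, and the upper bound $|\lambda-\widetilde\lambda_h|\lesssim h^{2k-2\eps}$ then follows from the identity
\[
\lambda-\widetilde\lambda_h=\|u-\widetilde u_h\|_{a}^2-\widetilde\lambda_h\|u-\widetilde u_h\|^2+(\text{consistency terms}),
\]
where $\|\cdot\|_a$ denotes the energy norm of the frozen bilinear form.

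\textbf{Step 3: the lower bound (main obstacle).} The expected difficulty is showing $\lambda-\widetilde\lambda_h\gtrsim h^{2k}$, which simultaneously establishes the sign, i.e.\ that $\widetilde\lambda_h$ is a lower bound. Using the expansion identity of Step 2, the dominant positive contribution should be $\|\A^{1/2}(\nabla u-\nabla_w\widetilde u_h)\|^2\ge\|\A^{1/2}(\nabla u-\Q_h\nabla u)\|^2$, since $\nabla_w\widetilde u_h$ is piecewise $P_{k-1}$. I would then invoke the standard saturation-type lower bound $\|\nabla u-\Q_h\nabla u\|\gtrsim h^k$, valid for $u\notin$ piecewise $P_k$, which I would state as a mild assumption; this is the source of the factor $\|u\|_{k+1}$ in the display \eqref{3.4}. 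The remaining terms, namely the $L^2$ term of size $h^{2k+2-2\eps}$ and the stabilizer/consistency cross terms of size $h^{2k+\eps}$ or smaller, are of higher order because $\eps>0$. Hence $\lambda-\widetilde\lambda_h\gtrsim h^{2k}$ for $h$ sufficiently small.
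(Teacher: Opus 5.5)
The paper gives no argument of its own here; it only cites Zhai et al. So your plan is the intended route: freeze the nonlinearity at $u$, read \eqref{3.3} as the WG discretization of the linear operator $A^u$ with potential $W:=V+f(u^2)$, and rerun the linear WG eigenvalue analysis. Two steps, however, are not right as written.

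\textbf{Step 1 (coercivity).} The bound $\|\nabla_w v\|^2+s(v,v)\gtrsim\|v\|_{1,h}^2$ is false for the weight $h_T^{-1+\eps}$, because the stabilizer only controls $h^{\eps}\sum_{T\in\T_h}h_T^{-1}\|Q_bv_0-v_b\|_{\partial T}^2$. For $k=1$, take $v_0=0$ and $v_b=1$ on interior edges: $\nabla_w v$ vanishes on every element without a boundary edge, and the ratio of the two sides is $O(h^{\eps})$. What holds is $\|\nabla_w v\|^2+s(v,v)\gtrsim h^{\eps}\|v\|_{1,h}^2$, which is what the paper uses in \eqref{3.10}. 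This does not hurt the result. It is exactly where the second factor $h^{-\eps/2}$ in \eqref{3.5} comes from: energy error $h^{k-\eps/2}$, then a further $h^{-\eps/2}$ to pass to $\|\cdot\|_{1,h}$. With the coercivity you state, though, your own bookkeeping would not produce $h^{k-\eps}$. Likewise, the $H^2$ regularity behind your duality step needs $\A$ Lipschitz, not merely $L^\infty$.

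\textbf{Step 3 (lower bound).} This is the substantive gap, since it is the whole content of the lemma, and you only assert that the unspecified ``consistency terms'' are $O(h^{2k+\eps})$. Write $\widetilde u_h=\{\widetilde u_0,\widetilde u_b\}$ and $e=u-\widetilde u_0$. Testing \eqref{3.3} with $Q_hu$ and using Lemma~\ref{lem3} gives the exact identity
\[
\begin{aligned}
\lambda-\widetilde\lambda_h={}&\|\A^{1/2}(\nabla u-\nabla_w\widetilde u_h)\|^2+s(\widetilde u_h-Q_hu,\widetilde u_h-Q_hu)-s(Q_hu,Q_hu)\\
&+(We,e)-\widetilde\lambda_h\|e\|^2+2R_{\A}+2R_W,
\end{aligned}
\]
where $R_{\A}=(\A(\nabla u-\Q_h\nabla u),\nabla_w\widetilde u_h)$ and $R_W=(W\widetilde u_0,u-Q_0u)$. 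Three things are hidden in your claim.

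(i) The stabilizer must be recombined as above. The cross term $s(Q_hu,\widetilde u_h)$ is only $O(h^{2k})$ by Cauchy--Schwarz, so it cannot be estimated on its own.

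(ii) The commuting property is indispensable here, not just in Step 1. If you instead test the continuous equation with $\widetilde u_h$, the energy term enters with a minus sign, and an edge residual appears that is not of higher order. For $\A=I$, $W=0$ this residual is $\sum_{T\in\T_h}\langle(\nabla u-\Q_h\nabla u)\cdot\bn,\widetilde u_0-\widetilde u_b\rangle_{\partial T}$, whose value at $Q_hu$ is exactly $-\|\nabla u-\Q_h\nabla u\|^2+\lambda\|u-Q_0u\|^2$. No sign can then be read off.

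(iii) $R_{\A}=0$ when $\A$ is piecewise constant, and $R_W=(W\widetilde u_0-Q_0(W\widetilde u_0),u-Q_0u)$ is higher order when $W$ is smooth. For smooth but non-constant $\A$, however, $R_{\A}$ is in general of the same order $h^{2k}$ as the principal term and has no sign. Under the stated hypotheses ($\A\in L^\infty$, $V\in L^2$) the only available bounds are $|R_{\A}|\lesssim h^k$ and $|R_W|\lesssim h^{k+1}$, neither of which is $o(h^{2k})$.

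So $\lambda-\widetilde\lambda_h\gtrsim h^{2k}$ follows from your argument only in the piecewise-constant-coefficient, smooth-potential setting of the cited linear result, a restriction the paper inherits silently. In that setting your inequality $\|\A^{1/2}(\nabla u-\nabla_w\widetilde u_h)\|\ge\|\A^{1/2}(\nabla u-\Q_h\nabla u)\|$ is valid; for non-constant $\A$ it fails, because $\Q_h$ is the unweighted projection, and you must use $\alpha$ from (A1) instead. Finally, $\|\nabla u-\Q_h\nabla u\|\gtrsim h^k$ need not be assumed: it is the Lin--Xie--Xu lower bound that the paper itself invokes in Lemma~\ref{lem11}.
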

\begin{proof}
	See \cite{Zhai2019c}.
\end{proof}

\begin{lemma}
	The bilinear form $a_w(\cdot,\cdot)$ is bounded with respect to $\|\cdot\|_{1,h}$, i.e.
	\begin{eqnarray}\label{3.1}
		|a_w(v_h,w_h)|\lesssim\|v_h\|_{1,h}\|w_h\|_{1,h}, \quad\forall v_h,w_h\in V_h.
	\end{eqnarray}
	%	Furthermore, there holds
	%	\begin{eqnarray}\label{3.2}
		%		a_w(v_h,v_h)\gtrsim h^\eps\|v_h\|_{1,h}^2,\quad\forall v_h\in V_h.
		%	\end{eqnarray}
\end{lemma}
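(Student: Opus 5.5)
We read $a_w(\cdot,\cdot)$ as the bilinear form $a_h(\cdot,\cdot)$ of \eqref{2.2}. We bound its three pieces separately, apply Cauchy--Schwarz to each, and reduce each to $\|\cdot\|_{1,h}$.

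\textbf{Stabilizer.} Since $\eps\in(0,1)$ and $h_T\le h\lesssim 1$, we have $h_T^{-1+\eps}\le C h_T^{-1}$. Cauchy--Schwarz on each $\partial T$ and then over $T$ gives
$$|s(v_h,w_h)|\lesssim \Big(\sumT h_T^{-1}\|Q_bv_0-v_b\|_{\partial T}^2\Big)^{1/2}\Big(\sumT h_T^{-1}\|Q_bw_0-w_b\|_{\partial T}^2\Big)^{1/2}\le \|v_h\|_{1,h}\|w_h\|_{1,h}.$$

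\textbf{Gradient term.} Since $\A\in L^\infty$, we have $|(\A\nabla_w v_h,\nabla_w w_h)|\lesssim \|\nabla_w v_h\|\,\|\nabla_w w_h\|$, so it suffices to show $\|\nabla_w v_h\|_T^2\lesssim \|\nabla v_0\|_T^2+h_T^{-1}\|Q_bv_0-v_b\|_{\partial T}^2$. Integrating by parts in the definition of the weak gradient, for $q\in[P_{k-1}(T)]^2$,
$$(\nabla_w v_h,q)_T=(\nabla v_0,q)_T+\la v_b-v_0,q\cdot\bn\ra=(\nabla v_0,q)_T+\la v_b-Q_bv_0,q\cdot\bn\ra.$$
The second equality holds because $q\cdot\bn|_e\in P_{k-1}(e)$. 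Now take $q=\nabla_w v_h$ and use the discrete trace/inverse inequality $\|q\|_{\partial T}\lesssim h_T^{-1/2}\|q\|_T$ for polynomials, which follows from \eqref{trace} plus an inverse estimate on shape-regular polygons. This gives $\|\nabla_w v_h\|_T\lesssim \|\nabla v_0\|_T+h_T^{-1/2}\|Q_bv_0-v_b\|_{\partial T}$. Summing over $T$ yields the claim.

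\textbf{Potential term.} We have $|(Vv_0,w_0)|\le \|V\|\,\|v_0\|_{L^4}\|w_0\|_{L^4}$. This requires a discrete Sobolev embedding $\|v_0\|_{L^4}\lesssim\|v_h\|_{1,h}$ for the piecewise polynomial $v_0$, which holds in 2D for broken $H^1$ functions with jump control. The jumps of $v_0$ across an edge are controlled by $Q_bv_0-v_b$ from both sides, using that $v_b$ is single valued and vanishes on $\partial\Omega$, together with $\|v_0-Q_bv_0\|_{\partial T}\lesssim h_T^{1/2}\|\nabla v_0\|_T$. This step is the main obstacle. If we cannot cite the broken Sobolev inequality (e.g.\ Brenner's Poincaré--Friedrichs/Sobolev inequalities for piecewise $H^1$), we fall back on assuming $V\in L^\infty$. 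Then we only need the discrete Poincaré inequality $\|v_0\|\lesssim\|v_h\|_{1,h}$, which is the same fact that makes $\|\cdot\|_{1,h}$ a norm \cite{Zhai2019c,Wang2017b}.

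Adding the three estimates gives \eqref{3.1}.
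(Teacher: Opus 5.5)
Your proof is correct. It is essentially the standard argument that the paper defers to by citation alone: Cauchy--Schwarz on the stabilizer using $h_T^{-1+\eps}\lesssim h_T^{-1}$, and the local bound $\|\nabla_w v_h\|_T\lesssim\|\nabla v_0\|_T+h_T^{-1/2}\|Q_bv_0-v_b\|_{\partial T}$, which follows from integration by parts and the discrete trace inequality. For the potential term you do not need the $V\in L^\infty$ fallback: the $L^4$ bound you want is exactly Lemma \ref{lem6} with $p=4$, and together with H\"older and $V\in L^2(\Omega)$ it closes the estimate.
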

\begin{proof}
	See \cite{Mu2013c,Ye2020,Zhai2019c}.
\end{proof}

\begin{lemma}\label{lem6}
	For any $p\ge1$, there holds the following estimate:
	\begin{eqnarray*}
		\|v_h\|_{L^p}\lesssim\|v_h\|_{1,h}, \quad\forall v_h\in V_h.
	\end{eqnarray*}
\end{lemma}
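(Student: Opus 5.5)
We need to prove $\|v_h\|_{L^p}\lesssim\|v_h\|_{1,h}$ for every $v_h\in V_h$ and every $p\ge1$. Here $v_h$ in $L^p$ means its interior part $v_0$, a piecewise polynomial. The plan is to reduce the claim to the discrete Sobolev embedding for broken $H^1$ spaces in two dimensions. That embedding says that for piecewise $H^1$ functions $w$ on a regular mesh, $\|w\|_{L^p}\lesssim \|w\|_{1,h,\mathrm{DG}}$ for all $p<\infty$ (Buffa--Ortner, Brenner, Lasis--S\"uli), where
\[
\|w\|_{1,h,\mathrm{DG}}^2=\sumT\|\nabla w\|_T^2+\sum_{e\in\E_h}h_e^{-1}\|[\![w]\!]\|_e^2 ,
\]
with the jump on boundary edges taken as the trace itself. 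It therefore suffices to bound the jump terms of $v_0$ by $\|v_h\|_{1,h}$.

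\textbf{Step 1 (jumps of $v_0$).} On an interior edge $e=\partial T_1\cap\partial T_2$, I split
\[
v_0|_{T_1}-v_0|_{T_2}=(v_0|_{T_1}-Q_bv_0|_{T_1})+(Q_bv_0|_{T_1}-v_b)+(v_b-Q_bv_0|_{T_2})+(Q_bv_0|_{T_2}-v_0|_{T_2}).
\]
The two middle terms are controlled by $h_T^{-1/2}\|Q_bv_0-v_b\|_{\partial T}$, which is a piece of $\|v_h\|_{1,h}$. For the outer terms, $Q_b$ reproduces $P_{k-1}$ functions on $e$. Hence
\[
\|v_0-Q_bv_0\|_e\le\|v_0-\Pi v_0\|_e ,
\]
where $\Pi$ is the elementwise $L^2$ projection onto $P_0(T)$. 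By the trace inequality \eqref{trace} and Poincar\'e on $T$,
\[
\|v_0-\Pi v_0\|_e^2\lesssim h_T\|\nabla v_0\|_T^2 .
\]
Summing, $h_e^{-1}\|[\![v_0]\!]\|_e^2$ is bounded by the local contributions to $\|v_h\|_{1,h}^2$. Boundary edges work the same way because $v_b=0$ there. This gives $\|v_0\|_{1,h,\mathrm{DG}}\lesssim\|v_h\|_{1,h}$.

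\textbf{Step 2 (embedding).} Now apply the broken Sobolev inequality, which holds on polygonal meshes satisfying the regularity assumptions of \cite{Wang2014a}. For $p\le 2$ the case follows from $p=2$ by H\"older on the bounded domain.

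\textbf{Main obstacle.} The main issue is whether to cite the broken Sobolev embedding or prove it. If a self-contained proof is wanted, I would proceed as follows. Construct an $H_0^1$-conforming reconstruction $Ev_0$ by nodal averaging on a shape-regular triangular subdivision of the polygons. It satisfies
\[
\|\nabla Ev_0\|\lesssim\|v_0\|_{1,h,\mathrm{DG}},\qquad \sumT h_T^{-2}\|v_0-Ev_0\|_T^2\lesssim\|v_0\|_{1,h,\mathrm{DG}}^2 .
\]
Then write $v_0=Ev_0+(v_0-Ev_0)$:
\begin{itemize}[leftmargin=*]
\item The conforming part is bounded in $L^p$ by the continuous embedding $H_0^1\subset L^p$ in 2D.
\item The nonconforming part is bounded elementwise with the inverse estimate $\|w\|_{L^p(T)}\lesssim h_T^{2/p-1}\|w\|_T$ for polynomials, giving $\|v_0-Ev_0\|_{L^p(T)}\lesssim h_T^{2/p}\,h_T^{-1}\|v_0-Ev_0\|_T$. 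Summing the $p$-th powers uses $\ell^2\subset\ell^p$ for $p\ge2$ and $h_T^{2/p}\lesssim1$.
\end{itemize}
The averaging estimates are standard (Karakashian--Pascal), but their verification on general polygons is the technical core.
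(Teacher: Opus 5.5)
Your argument is correct, but it does not follow the paper's route. The paper gives no proof of its own and simply cites Lemma 3 of \cite{Maday2023}.

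You instead make explicit the reduction the result rests on: $\|v_h\|_{1,h}$ controls the broken (DG) $H^1$ norm of $v_0$. The key point is that $k\ge1$ puts constants in $P_{k-1}(e)$, so $\|v_0-Q_bv_0\|_e\le\|v_0-\Pi v_0\|_e\lesssim h_T^{1/2}\|\nabla v_0\|_T$ by \eqref{trace} and Poincar\'e. The rest of each jump is exactly a stabilizer piece $\|Q_bv_0-v_b\|_{\partial T}$, with $v_b=0$ on $\partial\Omega$ and $h_e\sim h_T$ reconciling the weights. After that, any broken Sobolev inequality finishes the job.

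This buys transparency. It shows where $k\ge1$ and the mesh regularity are used. It also shows that the hidden constant depends on $p$, so the claim really holds for $1\le p<\infty$ only; no $h$-uniform bound holds for $p=\infty$. The statement's ``for any $p\ge1$'' glosses over this.

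The cost is that you still import the broken embedding, so you should pick a version whose hypotheses fit polygonal WG meshes. A BV-type argument in the spirit of Buffa--Ortner needs essentially only a uniform trace inequality like \eqref{trace} and $h_e\sim h_T$, which the WG regularity assumptions supply. Your averaging fallback instead needs a matching shape-regular triangular submesh, which those assumptions do not obviously provide. In that fallback, $v_0-Ev_0$ is also only piecewise polynomial on the subtriangles, so the $L^p$--$L^2$ inverse estimate has to be applied subtriangle by subtriangle.
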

\begin{proof}
	See Lemma 3 in \cite{Maday2023}.
\end{proof}

\begin{theo}\label{theo1}
	Under Assumptions (A1)-(A3), the following estimate holds true:
	\begin{eqnarray*}
		\left|\la (A_h^u-\widetilde{\lambda}_h)v_h,w_h\ran\right|\lesssim \|v_h\|_{1,h}\|w_h\|_{1,h}, \quad\forall v_h,w_h\in V_h.
	\end{eqnarray*}
\end{theo}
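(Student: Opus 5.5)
We expand the form using the definitions \eqref{2.2} and \eqref{2.3}:
\begin{eqnarray*}
\la (A_h^u-\widetilde{\lambda}_h)v_h,w_h\ran = (\A\nabla_w v_h,\nabla_w w_h)+(Vv_h,w_h)+s(v_h,w_h)+(f(u^2)v_h,w_h)-\widetilde{\lambda}_h(v_h,w_h).
\end{eqnarray*}
We then bound each of the five terms separately by $\|v_h\|_{1,h}\|w_h\|_{1,h}$.

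\textbf{The weak-Galerkin part.} The first and third terms form the diffusion and stabilization part of $a_h$, and the boundedness \eqref{3.1} covers them directly. If one prefers to check this by hand, the argument runs as follows. By the definition of the weak gradient and integration by parts, on each $T$
\begin{eqnarray*}
(\nabla_w v_h,q)_T=(\nabla v_0,q)_T+\la v_b-Q_bv_0,q\cdot\bn\ra
\end{eqnarray*}
for $q\in[P_{k-1}(T)]^2$, where we used $q\cdot\bn\in P_{k-1}(e)$. Applying the discrete trace inequality then gives $\|\nabla_w v_h\|_T\lesssim\|\nabla v_0\|_T+h_T^{-1/2}\|Q_bv_0-v_b\|_{\partial T}$. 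Since $\A\in L^\infty$, the diffusion term is bounded by $\|v_h\|_{1,h}\|w_h\|_{1,h}$. For the stabilizer, $h_T^{-1+\eps}\le h_T^{-1}$ for $h\le1$, so Cauchy--Schwarz bounds $s(v_h,w_h)$ by the jump parts of the two norms.

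\textbf{The zeroth-order terms.} These all go through H\"older's inequality combined with Lemma \ref{lem6}.
\begin{itemize}
\item For the potential, $V\in L^2$ gives $|(Vv_h,w_h)|\le\|V\|\,\|v_h\|_{L^4}\|w_h\|_{L^4}\lesssim\|v_h\|_{1,h}\|w_h\|_{1,h}$.
\item For the nonlinear term, Lemma \ref{lem1} gives $u\in L^\infty(\Omega)$, and Assumption (A3) gives $|f(u^2)|\le C(1+\|u\|_{L^\infty}^{2q})$. Hence $f(u^2)\in L^\infty$, and $|(f(u^2)v_h,w_h)|\lesssim\|v_h\|_{L^2}\|w_h\|_{L^2}\lesssim\|v_h\|_{1,h}\|w_h\|_{1,h}$ by Lemma \ref{lem6} with $p=2$.
\item For the last term, Lemma \ref{lem7} gives $|\widetilde{\lambda}_h|\le|\lambda|+Ch^{2k-2\eps}\|u\|_{k+1}$, which is bounded uniformly in $h$ once $h$ is small. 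Then $|\widetilde{\lambda}_h(v_h,w_h)|\lesssim\|v_h\|_{1,h}\|w_h\|_{1,h}$, again by Lemma \ref{lem6}.
\end{itemize}

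\textbf{Where care is needed.} The main point is to make sure every constant is independent of $h$. For the nonlinear term this rests on the $L^\infty$ bound for $u$ from Lemma \ref{lem1}. For $\widetilde{\lambda}_h$ it rests on the uniform bound from Lemma \ref{lem7}; alternatively, it follows from the min-max characterization with a fixed test function such as $Q_hu$. Without Lemma \ref{lem1}, one would instead need Sobolev-type control of $\|u\|_{L^{4q}}$ together with a higher Lebesgue exponent from Lemma \ref{lem6}, which still works since Lemma \ref{lem6} holds for every $p$.

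Summing the five bounds yields the claimed estimate.
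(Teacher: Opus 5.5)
Your proof is correct and follows the paper's argument: you expand $\langle (A_h^u-\widetilde{\lambda}_h)v_h,w_h\rangle$, bound the $a_h$ part by \eqref{3.1}, and control the zeroth-order terms by H\"older's inequality together with Lemma \ref{lem6}. The differences are minor. For $(f(u^2)v_h,w_h)$ the paper applies (A3) with H\"older in $L^{3/2}\times L^6\times L^6$, which needs only integrability of $u$, whereas you use the $L^\infty$ bound from Lemma \ref{lem1}. You also split off the potential term, and you explicitly justify the uniform bound on $\widetilde{\lambda}_h$, which the paper takes for granted.
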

\begin{proof}
	It follows from the definition in \eqref{2.1}-\eqref{2.3} that
	\begin{eqnarray*}
		\la (A_h^u-\widetilde{\lambda}_h)v_h,w_h\ran=a_h(v_h,w_h)+(f(u^2)v_h,w_h)-\widetilde{\lambda}_h(v_h,w_h).
	\end{eqnarray*}
	The first term has been estimated in \eqref{3.1}. For the second term, by Assumption (A3) we get
	\begin{eqnarray*}
		(f(u_h^2)v_h,w_h)\lesssim \|1+|u|^{2q}\|_{L^{3/2}}\|v_h\|_{L^6}\|w_h\|_{L^6}.
	\end{eqnarray*}
	Then, the Sobolev embedding $H^1(\Omega)\hookrightarrow L^6(\Omega)$ implies that
	\begin{eqnarray*}
		(f(u_h^2)v_h,w_h)\lesssim\|v_h\|_{1,h}\|w_h\|_{1,h}.
	\end{eqnarray*}
	For the last term, by Lemma \ref{lem6} we obtain
	\begin{eqnarray*}
		\widetilde{\lambda}_h(v_h,w_h)\lesssim\|v_h\|\|w_h\|\lesssim\|v_h\|_{1,h}\|w_h\|_{1,h}.
	\end{eqnarray*}
	The proof is finished.
\end{proof}

%\begin{coro}
%	$\left|\la (E_h''(u)-\widetilde{\lambda}_h)v_h,w_h\ran\right|\lesssim \|v_h\|_{1,h}\|w_h\|_{1,h}$, $\forall v_h,w_h\in V_h$.
%\end{coro}
%\begin{proof}
%	By the definition \eqref{E''h}, we have
%	\begin{eqnarray*}
	%		\la (E_h''(u)-\widetilde{\lambda}_h)v_h,w_h\ran=\la (A_h^u-\widetilde{\lambda}_h)v_h,w_h\ran+2(f'(u^2)u^2v_h,w_h).
	%	\end{eqnarray*}
%	It follows from Assumption (A4) that
%	\begin{eqnarray*}
	%		\Big|(f'(u^2)u^2v_h,w_h)\Big|\le \|f'(u^2)u^2\|_{L^\infty}\|v_h\|\|w_h\|.
	%	\end{eqnarray*}
%	Then, by Assumption (A4) and Lemma \ref{lem6}, we get
%	\begin{eqnarray*}
	%		\Big|(f'(u^2)u^2v_h,w_h)\Big|\lesssim \|v_h\|_{1,h}\|w_h\|_{1,h},
	%	\end{eqnarray*}
%	which combines with Theorem \ref{theo1} completes the proof.
%\end{proof}

\begin{theo}\label{theo2}
	$\la (A_h^u-\widetilde{\lambda}_h)v_h,v_h\ran\ge 0$, $\forall v_h\in V_h$.
\end{theo}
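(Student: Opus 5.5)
The plan is to use the variational (Rayleigh quotient) characterization of the smallest eigenvalue of the linear discrete problem \eqref{3.3}. For the fixed function $u$ (the non-negative minimizer from Lemma \ref{lem1}), the form $\la A_h^u v_h,w_h\ran = a_h(v_h,w_h)+(f(u^2)v_h,w_h)$ is a symmetric bilinear form on the finite-dimensional space $V_h$. It is symmetric because $\A$ is symmetric, $s(\cdot,\cdot)$ is symmetric, and $(Vv_h,w_h)$ and $(f(u^2)v_h,w_h)$ are symmetric $L^2$-type products. Boundedness on $V_h$ follows from Theorem \ref{theo1}. Hence \eqref{3.3} is a symmetric generalized eigenvalue problem on $V_h$ with the $L^2$ inner product $(\cdot,\cdot)$ as mass form. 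Here the $L^2$ norm is taken on the interior component $v_0$, and it is a genuine inner product on $V_h$, as I argue below.

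The key steps, in order, are as follows. First, verify that $(v_h,v_h)=\|v_0\|^2>0$ for $v_h\neq 0$, so that the Rayleigh quotient is well defined. If $v_0=0$ but $v_b\ne0$, one needs care. I would handle this by noting that the mass form $(v_h,v_h)$ is positive semidefinite and that on its kernel $\{v_0=0\}$ the form $\la A_h^u v_h,v_h\ran \ge s(v_h,v_h) = \sumT h_T^{-1+\eps}\|v_b\|^2_{\partial T}\ge0$. Second, decompose $V_h$ into $A_h^u$-orthogonal eigenvectors of the pencil, with infinite eigenvalues corresponding to directions where the mass form vanishes. Equivalently, use the min-max characterization
\[
\widetilde{\lambda}_h=\min_{v_h\in V_h,\ \|v_h\|\neq0}\frac{\la A_h^u v_h,v_h\ran}{\|v_h\|^2}.
\]
Third, conclude $\la A_h^u v_h,v_h\ran\ge\widetilde\lambda_h\|v_h\|^2$ for all $v_h$ with $\|v_h\|\ne0$, which is exactly the claim $\la (A_h^u-\widetilde\lambda_h)v_h,v_h\ran\ge0$.

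The main obstacle is the degenerate directions $v_0=0$, where the plain Rayleigh quotient argument does not apply directly. Write $v_h=v_h^1+v_h^2$, where $v_h^2$ lies in the kernel $\{v_0=0\}$ and $v_h^1$ lies in its $a$-orthogonal complement; this requires $A_h^u$ to be positive definite on the kernel. Positive definiteness there holds because $s$ controls $v_b$ when $v_0=0$: the form reduces to $\|\A^{1/2}\nabla_w v_h\|^2+s(v_h,v_h)$, with $f(u^2)$ and $V$ terms vanishing, and $s(v_h,v_h)>0$ unless $v_b=0$. The cross terms then vanish, and on the complement the Rayleigh quotient bound applies. So
\[
\la (A_h^u-\widetilde\lambda_h)v_h,v_h\ran=\la (A_h^u-\widetilde\lambda_h)v_h^1,v_h^1\ran+\la A_h^uv_h^2,v_h^2\ran\ge0.
\]
Throughout, I take $\widetilde\lambda_h$ to be the smallest eigenvalue, as stipulated in Lemma \ref{lem7} ("first eigenpair").
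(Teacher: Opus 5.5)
Your proof is correct, but it takes a different route from the paper.

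The paper writes $v_h=(v_h,\widetilde u_h)\widetilde u_h+w_h$ and uses \eqref{3.3} to eliminate the $\widetilde u_h$-component. It then appeals to simplicity of $\widetilde\lambda_h$ for small $h$ (inherited from Lemma \ref{lem1} via \eqref{3.6}) to assert coercivity of $A_h^u-\widetilde\lambda_h$ on $\mathrm{span}\{\widetilde u_h\}^\perp$. This gives the quantitative bound \eqref{3.7}, $\la(A_h^u-\widetilde\lambda_h)v_h,v_h\ran\gtrsim\|v_h\|^2-|(v_h,\widetilde u_h)|^2\ge0$.

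You instead use only the Rayleigh-quotient characterization of the smallest eigenvalue of the symmetric pencil. First you split off the kernel $\{v_0=0\}$ of the mass form by $A_h^u$-orthogonality. This is legitimate because $\la A_h^u v_h,v_h\ran\ge s(v_h,v_h)>0$ for nonzero $v_h$ in that kernel.

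Comparing the two:
\begin{itemize}
\item Your argument is purely algebraic and holds on every mesh. The statement carries no smallness assumption on $h$, whereas the paper's proof needs one.
\item It uses neither simplicity nor \eqref{3.6}.
\item It handles explicitly the degeneracy of $(\cdot,\cdot)$ on $V_h$, which the paper passes over (on $\mathrm{span}\{\widetilde u_h\}^\perp$ the $L^2$ norm is only a seminorm).
\item What it does not give is the gap estimate \eqref{3.7}, which the paper extracts from the same proof and reuses in Theorem \ref{theo3.3}. Recovering it would need a uniform lower bound on the gap between the first two eigenvalues of \eqref{3.3}, which is exactly the ingredient the paper invokes.
\end{itemize}

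Two small clean-ups:
\begin{itemize}
\item Delete the early sentence calling $(\cdot,\cdot)$ a genuine inner product on $V_h$; you yourself then show it is not.
\item State explicitly that the eigenpairs of \eqref{3.3} are exactly those of the pencil restricted to your $A_h^u$-orthogonal complement. This holds because the mass term $(\cdot,v_h)$ vanishes for test functions with $v_0=0$. It guarantees that the Rayleigh minimum over the complement is an eigenvalue of \eqref{3.3}, hence is at least $\widetilde\lambda_h$, which is the direction your inequality actually needs.
\end{itemize}
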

\begin{proof}
	From \eqref{3.3} we get
	\begin{eqnarray*}
		\la (A_h^u-\widetilde{\lambda}_h)v_h,v_h\ran=\la(A_h^u-\widetilde{\lambda}_h)\left(v_h-(v_h,\widetilde{u}_h)\widetilde{u}_h\right),v_h-(v_h,\widetilde{u}_h)\widetilde{u}_h\ran.
	\end{eqnarray*}
	By Lemma \ref{lem1} and \eqref{3.6}, $\widetilde{\lambda}_h$ is a simple eigenvalue of $A_h^u$ when $h$ is sufficient small. Furthermore, $A_h^u-\widetilde{\lambda}_h$ is coercive on the subspace  span$\{\widetilde{u}_h\}^\perp$. Therefore, we have
	\begin{equation}\label{3.7}
		\begin{aligned}
			\la (A_h^u-\widetilde{\lambda}_h)v_h,v_h\ran&\gtrsim \|v_h-(v_h,\widetilde{u}_h)\widetilde{u}_h\|^2\\
			&=\|v_h\|^2-|(v_h,\widetilde{u}_h)|^2\\
			&\ge0.
		\end{aligned}
	\end{equation}
	The proof is completed.
\end{proof}

\begin{theo}\label{theo3.3}
	Let $(u_h,\lambda_h)$ be the first eigenpair of \eqref{2.4} satisfying $(u_h,\widetilde{u}_h)\ge0$, then we have
	\begin{eqnarray}\label{3.8}
		\la (A_h^u-\widetilde{\lambda}_h)(u_h-\widetilde{u}_h),u_h-\widetilde{u}_h\ran\gtrsim h^\eps\|u_h-\widetilde{u}_h\|_{1,h}^2.
	\end{eqnarray}
\end{theo}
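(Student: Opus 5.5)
Write $e_h=u_h-\widetilde{u}_h$. I would start from the coercivity of $A_h^u-\widetilde{\lambda}_h$ on $\mathrm{span}\{\widetilde{u}_h\}^\perp$, as in the proof of Theorem \ref{theo2}. This time the target is the weaker-but-stronger-norm bound $h^\eps\|\cdot\|_{1,h}^2$ rather than the $L^2$ bound in \eqref{3.7}.

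\textbf{Step 1: energy-norm coercivity of $a_h$.} Since $h_T^{-1+\eps}\ge h^{\eps}h_T^{-1}$, the stabilizer satisfies $s(v_h,v_h)\ge h^\eps\sumT h_T^{-1}\|Q_bv_0-v_b\|_{\partial T}^2$. Next, the standard WG argument controls $\nabla v_0$: on each $T$, integration by parts in the definition of $\nabla_w$ gives
\begin{eqnarray*}
\|\nabla v_0\|_T^2\lesssim\|\nabla_w v_h\|_T^2+h_T^{-1}\|Q_bv_0-v_b\|_{\partial T}^2 .
\end{eqnarray*}
Combining these with Assumption (A1), with $V\ge0$, and with $f(u^2)\ge f(0)$ (from (A2) $f$ is increasing; shift by a constant if needed) yields
\begin{eqnarray*}
\la A_h^uv_h,v_h\ran+C\|v_h\|^2\gtrsim h^\eps\|v_h\|_{1,h}^2 \quad\text{for all } v_h\in V_h .
\end{eqnarray*}

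\textbf{Step 2: spectral gap in the energy norm.} Decompose $v_h=(v_h,\widetilde{u}_h)\widetilde{u}_h+w_h$ with $w_h\perp\widetilde{u}_h$. Simplicity of $\widetilde{\lambda}_h$ (from Lemma \ref{lem1} and \eqref{3.6}) gives a uniform gap $\widetilde{\lambda}_{2,h}-\widetilde{\lambda}_h\ge\delta>0$ for small $h$. For $\theta\in(0,1)$ write
\begin{eqnarray*}
\la(A_h^u-\widetilde{\lambda}_h)w_h,w_h\ran=\theta\la(A_h^u-\widetilde{\lambda}_h)w_h,w_h\ran+(1-\theta)\la(A_h^u-\widetilde{\lambda}_h)w_h,w_h\ran .
\end{eqnarray*}
Bound the first part below via Step 1 by $\theta h^\eps\|w_h\|_{1,h}^2-C\theta\|w_h\|^2$, and the second part by $(1-\theta)\delta\|w_h\|^2$. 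Choosing $\theta$ small then gives
\begin{eqnarray*}
\la(A_h^u-\widetilde{\lambda}_h)w_h,w_h\ran\gtrsim h^\eps\|w_h\|_{1,h}^2 .
\end{eqnarray*}

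\textbf{Step 3: apply to $e_h$, and the main obstacle.} Since $\widetilde{u}_h$ is in the kernel, $\la(A_h^u-\widetilde{\lambda}_h)e_h,e_h\ran=\la(A_h^u-\widetilde{\lambda}_h)w_h,w_h\ran$ with $w_h=e_h-(e_h,\widetilde{u}_h)\widetilde{u}_h$. It therefore remains to show $\|e_h\|_{1,h}\lesssim\|w_h\|_{1,h}$, i.e. that the component along $\widetilde{u}_h$ is negligible; this is the main obstacle. Because both functions have unit $L^2$ norm, $(e_h,\widetilde{u}_h)=(u_h,\widetilde{u}_h)-1=-\tfrac12\|e_h\|^2$. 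Hence
\begin{eqnarray*}
\|e_h\|_{1,h}\le\|w_h\|_{1,h}+\tfrac12\|e_h\|^2\|\widetilde{u}_h\|_{1,h}.
\end{eqnarray*}
Here $\|\widetilde{u}_h\|_{1,h}$ is bounded by \eqref{3.5}, and Lemma \ref{lem6} gives $\|e_h\|\lesssim\|e_h\|_{1,h}$. The correction is thus of order $\|e_h\|_{1,h}^2$, which can be absorbed once $\|e_h\|_{1,h}$ is small.

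That smallness needs a priori convergence of $u_h$ to $u$. The sign condition $(u_h,\widetilde{u}_h)\ge0$ rules out the $-u_h$ branch. I would obtain the convergence from the uniqueness of the minimizer (Lemma \ref{lem1}) together with a compactness or energy argument: compare $E_h(u_h)\le E_h(\widetilde{u}_h)$ and use the strict convexity in (A2). If that argument proves too delicate, I would instead state the result under the hypothesis that $\|u_h-\widetilde{u}_h\|$ is small, which is the regime for $h$ sufficiently small. The absorption then gives $\|e_h\|_{1,h}\lesssim\|w_h\|_{1,h}$, and Step 2 concludes \eqref{3.8}.
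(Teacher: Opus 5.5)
Your Steps 1 and 2 are sound: Step 1 is the paper's \eqref{3.10}, and the $\theta$-splitting is the same absorption device the paper uses. Step 3, however, has a genuine gap. To absorb $\tfrac12\|e_h\|^2\|\widetilde{u}_h\|_{1,h}$ into $\|e_h\|_{1,h}$ you need $\|e_h\|$ to be small a priori, and nothing available at this point gives that. In the paper, the convergence of $u_h$ to $\widetilde{u}_h$ (Lemma \ref{theo4}) is derived \emph{from} this theorem, so appealing to it would be circular. The energy comparison you sketch does not close by itself either. The inequality $E_h(u_h)\le E_h(\widetilde{u}_h)$ only gives an \emph{upper} bound on $X:=\langle(A_h^u-\widetilde{\lambda}_h)e_h,e_h\rangle$. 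Turning that into smallness of $\|e_h\|$ requires a lower bound on $X$ in terms of $\|e_h\|$, which is exactly the missing ingredient described next. Your fallback hypothesis that $\|u_h-\widetilde{u}_h\|$ is small changes the statement.

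The missing idea is to use $(u_h,\widetilde{u}_h)\ge0$ quantitatively, not only to discard the branch $-u_h$. Set $c=(u_h,\widetilde{u}_h)\in[0,1]$. Unit norms give $(e_h,\widetilde{u}_h)=c-1$ and $\|e_h\|^2=2-2c$. Hence your $w_h=e_h-(e_h,\widetilde{u}_h)\widetilde{u}_h$ satisfies $\|w_h\|^2=1-c^2\ge1-c=\tfrac12\|e_h\|^2$, with no smallness needed.

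This repairs Step 3 immediately. Since $\|w_h\|\le1$, Lemma \ref{lem6} gives $|(e_h,\widetilde{u}_h)|=1-c\le\|w_h\|^2\le\|w_h\|\lesssim\|w_h\|_{1,h}$. Therefore $\|e_h\|_{1,h}\lesssim\|w_h\|_{1,h}$ without any a priori smallness of $e_h$.

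The paper's route is shorter still. The $L^2$ gap on $\mathrm{span}\{\widetilde{u}_h\}^\perp$ gives $X\ge\delta\|w_h\|^2\ge\frac{\delta}{2}\|e_h\|^2$, which is \eqref{3.9}. Now apply your Step 1 to $e_h$ itself rather than to $w_h$: $X\ge c_0h^\eps\|e_h\|_{1,h}^2-\eta\|e_h\|^2$. Combining the two yields $(1+2\eta/\delta)X\ge c_0h^\eps\|e_h\|_{1,h}^2$. This way one never compares $\|e_h\|_{1,h}$ with $\|w_h\|_{1,h}$, and never needs a bound on $\|\widetilde{u}_h\|_{1,h}$.
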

\begin{proof}
	By tkaing $v_h=u_h-\widetilde{u}_h$ in \eqref{3.7}, we obtain
	\begin{eqnarray*}
		\la (A_h^u-\widetilde{\lambda}_h)(u_h-\widetilde{u}_h),u_h-\widetilde{u}_h\ran
		\gtrsim\|u_h-\widetilde{u}_h\|^2-|(u_h-\widetilde{u}_h,\widetilde{u}_h)|^2.
	\end{eqnarray*}
	Since $\|u_h\|=\|\widetilde{u}_h\|=1$, it follows that
	\begin{eqnarray*}
		\|u_h-\widetilde{u}_h\|^2-|(u_h-\widetilde{u}_h,\widetilde{u}_h)|^2
		%		&=\|u_h-\widetilde{u}_h\|^2-|(u_h,\widetilde{u}_h)|^2+2(u_h,\widetilde{u}_h)-1\\
		&=1-|(u_h,\widetilde{u}_h)|^2\\
		&\ge1-(u_h,\widetilde{u}_h)\\
		&=\frac12\|u_h-\widetilde{u}_h\|^2.
	\end{eqnarray*}
	Therefore, we arrive at
	\begin{eqnarray}\label{3.9}
		\la (A_h^u-\widetilde{\lambda}_h)(u_h-\widetilde{u}_h),u_h-\widetilde{u}_h\ran
		\gtrsim\|u_h-\widetilde{u}_h\|^2.
	\end{eqnarray}
	A combination of Assumption (A2) and \eqref{3.22} leads to
	\begin{eqnarray*}
		\la (A_h^u-\widetilde{\lambda}_h)v_h,v_h\ran&=a_h(v_h,v_h)+(f(u^2)v_h,v_h)-\widetilde{\lambda}_h\|v_h\|^2\\
		&\ge h^\eps\|v_h\|_{1,h}^2+(f(0)-\widetilde{\lambda}_h)\|v_h\|^2,
	\end{eqnarray*}
	which implies that there exists $\eta>0$ such that
	\begin{eqnarray}\label{3.10}
		(A_h^u-\widetilde{\lambda}_h)v_h,v_h\ran\ge h^\eps\|v_h\|_{1,h}^2-\eta\|v_h\|^2,\quad\forall v_h\in V_h.
	\end{eqnarray}
	Combining \eqref{3.9}-\eqref{3.10} we demonstrate \eqref{3.8}. The proof is finished.
\end{proof}

%\begin{coro}\label{coro2}
%	$\la (E_h''(u)-\widetilde{\lambda}_h)(u_h-\widetilde{u}_h),u_h-\widetilde{u}_h\ran\gtrsim h^\eps\|u_h-\widetilde{u}_h\|_{1,h}^2$.
%\end{coro}
%\begin{proof}
%	By the definition \eqref{E''h} and Assumption (A2), we have
%	\begin{eqnarray*}
	%		\la (E_h''(u)-\widetilde{\lambda}_h)v_h,w_h\ran&=\la (A_h^u-\widetilde{\lambda}_h)v_h,w_h\ran+2(f'(u^2)u^2v_h,w_h)\\
	%		&\ge\la (A_h^u-\widetilde{\lambda}_h)v_h,w_h\ran.
	%	\end{eqnarray*}
%	which combines with Theorem \ref{theo3.3} finishes the proof.
%\end{proof}

\begin{coro}
	There holds the following estimate:
	\begin{eqnarray}\label{3.11}
		\la (E_h''(u)-\widetilde{\lambda}_h)v_h,v_h\ran\gtrsim h^\eps\|v_h\|_{1,h}^2,\quad\forall v_h\in V_h.
	\end{eqnarray}
\end{coro}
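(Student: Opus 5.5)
The plan is to split $E_h''(u)$ into the linear operator $A_h^u$ and a nonnegative nonlinear correction. Then the coercivity argument of Theorem \ref{theo3.3} can be reused, but applied to an arbitrary $v_h\in V_h$ rather than only to $u_h-\widetilde{u}_h$.

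By \eqref{E''h} with $u_h$ replaced by $u$,
\begin{eqnarray*}
\la (E_h''(u)-\widetilde{\lambda}_h)v_h,v_h\ran=\la (A_h^u-\widetilde{\lambda}_h)v_h,v_h\ran+2(f'(u^2)u^2v_h,v_h).
\end{eqnarray*}
By Assumption (A2), $f'=F''>0$, so the last term is nonnegative. It therefore suffices to show
\begin{eqnarray*}
\la (A_h^u-\widetilde{\lambda}_h)v_h,v_h\ran+2(f'(u^2)u^2v_h,v_h)\gtrsim h^\eps\|v_h\|_{1,h}^2 .
\end{eqnarray*}
Write $v_h=c\,\widetilde{u}_h+w_h$ with $c=(v_h,\widetilde{u}_h)$ and $(w_h,\widetilde{u}_h)=0$. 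By \eqref{3.3}, $(A_h^u-\widetilde{\lambda}_h)\widetilde{u}_h=0$, so the first term equals $\la (A_h^u-\widetilde{\lambda}_h)w_h,w_h\ran$. By \eqref{3.7} this is $\gtrsim\|w_h\|^2$, and combining with \eqref{3.10} in the usual way (a convex combination of the two lower bounds) gives $\gtrsim h^\eps\|w_h\|_{1,h}^2$.

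The component along $\widetilde{u}_h$ is not controlled by $A_h^u-\widetilde{\lambda}_h$, which is only semidefinite there. This is the main obstacle, and it is where the extra term $2(f'(u^2)u^2v_h,v_h)$ must supply coercivity. I would first show
\begin{eqnarray*}
(f'(u^2)u^2\widetilde{u}_h,\widetilde{u}_h)\ge c_0>0
\end{eqnarray*}
for $h$ small. This follows because $\widetilde{u}_h\to u$ in $L^2$ by \eqref{3.6}, and $(f'(u^2)u^2u,u)>0$ since $u>0$ in $\Omega$ (Lemma \ref{lem1}) and $\inf F''>0$. Here the weight $f'(u^2)u^2$ is bounded by Assumption (A4) together with $u\in L^\infty$. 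Expanding,
\begin{eqnarray*}
(f'(u^2)u^2v_h,v_h)\ge c^2c_0-2|c|\,C\|w_h\|-\text{(nonneg.)},
\end{eqnarray*}
and Young's inequality then gives
\begin{eqnarray*}
\la (E_h''(u)-\widetilde{\lambda}_h)v_h,v_h\ran\gtrsim c^2+\|w_h\|^2+h^\eps\|w_h\|_{1,h}^2 .
\end{eqnarray*}
To make this work, I would keep a fraction of the $\|w_h\|^2$ bound from \eqref{3.7} to absorb the cross term.

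Finally, return to $\|v_h\|_{1,h}$ via
\begin{eqnarray*}
h^\eps\|v_h\|_{1,h}^2\lesssim h^\eps c^2\|\widetilde{u}_h\|_{1,h}^2+h^\eps\|w_h\|_{1,h}^2 .
\end{eqnarray*}
By \eqref{3.5}, $\|\widetilde{u}_h\|_{1,h}$ is uniformly bounded, so $h^\eps c^2\|\widetilde{u}_h\|_{1,h}^2\lesssim c^2$, which closes the estimate \eqref{3.11}.
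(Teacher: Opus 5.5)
Your route differs from the paper's. The paper combines the G{\aa}rding-type bound \eqref{3.12} with an $L^2$-coercivity bound \eqref{3.13} obtained by contradiction from Theorem~\ref{theo2}; you instead split $v_h=c\,\widetilde{u}_h+w_h$ and estimate directly. Most of your steps are fine: the splitting, the bound $\gtrsim\|w_h\|^2+h^{\varepsilon}\|w_h\|_{1,h}^2$ for the $A_h^u$ part, the estimate $(\omega\widetilde{u}_h,\widetilde{u}_h)\ge c_0$ with $\omega:=f'(u^2)u^2\in L^\infty(\Omega)$, and the final return to $\|v_h\|_{1,h}$. What fails is the absorption of the cross term. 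You bound $2|c|\,|(\omega\widetilde{u}_h,w_h)|\le 2C|c|\,\|w_h\|$ and discard $(\omega w_h,w_h)$, which in fact enters the expansion with a plus sign, not a minus sign. With $\gamma$ the constant in \eqref{3.7}, the most you can then conclude is
\[
\langle (E_h''(u)-\widetilde{\lambda}_h)v_h,v_h\rangle\ge \gamma\|w_h\|^2+2c_0c^2-4C|c|\,\|w_h\| .
\]
This quadratic form in $(|c|,\|w_h\|)$ is positive definite only if $\gamma c_0>2C^2$. That requires the spectral gap in \eqref{3.7} to dominate the size of the weight $\omega$, and nothing in (A1)--(A5) relates the two. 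For a strong nonlinearity, e.g.\ $F(t)=\frac{\beta}{2}t^2$ with $\beta\gg1$, the gap is small while $\omega$ is large. Keeping ``a fraction'' of $\gamma\|w_h\|^2$ cannot help when all of it is insufficient.

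The missing idea is to use $(\omega w_h,w_h)$ rather than throw it away. The Cauchy--Schwarz inequality in the $\omega$-weighted inner product gives $|(\omega\widetilde{u}_h,w_h)|\le(\omega\widetilde{u}_h,\widetilde{u}_h)^{1/2}(\omega w_h,w_h)^{1/2}$. Young's inequality then gives, for any $\delta\in(0,1)$,
\[
(\omega v_h,v_h)\ge\delta\,c^2(\omega\widetilde{u}_h,\widetilde{u}_h)-\frac{\delta}{1-\delta}(\omega w_h,w_h)\ge\delta c_0c^2-\frac{\delta\|\omega\|_{L^\infty}}{1-\delta}\|w_h\|^2 .
\]
Now the negative term falls below any prescribed fraction of $\gamma\|w_h\|^2$ once $\delta$ is small, with no condition linking $\gamma$ and $\omega$. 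With this replacement your argument closes and yields \eqref{3.11} with explicit, $h$-independent constants. A contradiction argument, as in the paper, avoids the issue differently. Along a sequence with $\|v_h\|=1$ and vanishing quadratic form, \eqref{3.7} forces $\|w_h\|\to0$, so the cross term disappears and only $(\omega\widetilde{u}_h,\widetilde{u}_h)\ge c_0$ is needed. Your repaired direct argument has the advantage of making the coercivity constant, and its uniformity in $h$, explicit.
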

\begin{proof}
	It follows from the definition \eqref{E''h} and Assumption (A2) that
	\begin{eqnarray*}
		\la (E_h''(u)-\widetilde{\lambda}_h)v_h,v_h\ran&=\la (A_h^u-\widetilde{\lambda}_h)v_h,v_h\ran+2\int_\Omega f'(u^2)u^2v_h^2 d\Omega\\
		&\ge\la (A_h^u-\widetilde{\lambda}_h)v_h,v_h\ran,
	\end{eqnarray*}
	which together with \eqref{3.10} leads to
	\begin{eqnarray}\label{3.12}
		\la (E_h''(u)-\widetilde{\lambda}_h)v_h,v_h\ran \ge h^\eps\|v_h\|_{1,h}^2-\eta\|v_h\|^2,\quad\forall v_h\in V_h.
	\end{eqnarray}
	Reasoning by contraction, we deduce from Theorem \ref{theo2} that
	\begin{eqnarray}\label{3.13}
		\la (E_h''(u)-\widetilde{\lambda}_h)v_h,v_h\ran\gtrsim \|v_h\|^2,\quad\forall v_h\in V_h.
	\end{eqnarray}
	Combining \eqref{3.12}-\eqref{3.13} we demonstrate \eqref{3.11} and complete the proof.
\end{proof}

\begin{lemma}\label{theo4}
	Assume that the ground state $u\in H^{k+1}(\Omega)$, then under Assumptions (A1)-(A3), we have the following estimate:
	\begin{eqnarray*}
		\|u_h-\widetilde{u}_h\|_{1,h}=o(h^\eps),~as~h\rightarrow0.
	\end{eqnarray*}
\end{lemma}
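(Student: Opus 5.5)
This is the discrete analogue of the Cancès–Chakir–Maday argument: compare the discrete minimizer $u_h$ with the auxiliary eigenvector $\widetilde{u}_h$ through energies, and get the rate from the coercivity bound \eqref{3.8}. The two ingredients are an energy comparison, $E_h(u_h)\le E_h(\widetilde{u}_h)$, and a second-order expansion of $E_h(u_h)-E_h(\widetilde{u}_h)$ about $\widetilde{u}_h$.

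\textbf{Step 1 (energy bound and consistency).} Since $u_h$ is the ground state of \eqref{2.4}, it minimizes $E_h$ on the unit $L^2$-sphere of $V_h$, so $E_h(u_h)\le E_h(\widetilde{u}_h)$. Using \eqref{3.5}, \eqref{3.6}, Lemma \ref{lem6} and Assumption (A3), I will show $E_h(\widetilde{u}_h)\to E(u)$. This gives $\|u_h\|_{1,h}$ bounded uniformly up to the factor coming from the stabilizer weight $h_T^{-1+\eps}$; in fact $a_h(u_h,u_h)\lesssim 1$.

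\textbf{Step 2 (expansion of the energy difference).} Set $e_h=u_h-\widetilde{u}_h$. Using $\|u_h\|=\|\widetilde{u}_h\|=1$ and \eqref{3.3}, i.e.
\[
a_h(\widetilde{u}_h,e_h)+(f(u^2)\widetilde{u}_h,e_h)=\widetilde{\lambda}_h(\widetilde{u}_h,e_h),
\]
together with $(\widetilde{u}_h,e_h)=-\tfrac12\|e_h\|^2$, I will write
\[
E_h(u_h)-E_h(\widetilde{u}_h)=\tfrac12\la (A_h^u-\widetilde{\lambda}_h)e_h,e_h\ran+\tfrac12\int_\Omega\big[F(u_h^2)-F(\widetilde{u}_h^2)-f(u^2)(u_h^2-\widetilde{u}_h^2)\big]\,d\Omega .
\]
The quadratic part is exactly the coercive form in \eqref{3.8}. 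By convexity of $F$ (Assumption (A2)),
\[
F(u_h^2)-F(\widetilde{u}_h^2)\ge f(\widetilde{u}_h^2)(u_h^2-\widetilde{u}_h^2),
\]
so the remainder is bounded below by
\[
\tfrac12\int_\Omega \big(f(\widetilde{u}_h^2)-f(u^2)\big)\big(u_h^2-\widetilde{u}_h^2\big)\,d\Omega .
\]
By Hölder, (A3), Lemma \ref{lem6} and the convergence $\widetilde{u}_h\to u$ from \eqref{3.5}–\eqref{3.6}, this term is $o(1)\,\|e_h\|_{1,h}$; more precisely it is bounded by $\|f(\widetilde{u}_h^2)-f(u^2)\|_{L^{3/2}}\|e_h\|_{L^6}\|u_h+\widetilde{u}_h\|_{L^6}$.

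\textbf{Step 3 (conclusion).} Combining Steps 1–2 with Theorem \ref{theo3.3} gives
\[
h^\eps\|e_h\|_{1,h}^2\lesssim o(1)\,\|e_h\|_{1,h}.
\]
A priori bounds on $\|e_h\|_{1,h}$ come from Step 1. The $\|u_h+\widetilde{u}_h\|_{L^6}$ factor is controlled by Lemma \ref{lem6} and these a priori bounds. Dividing through yields the decay of $\|e_h\|_{1,h}$. Since the target is only $o(h^\eps)$, I intend to keep the $h^\eps$ weights loose and track only that the right-hand side vanishes as $h\to0$.

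\textbf{Main obstacle.} The delicate point is Step 3: quantifying $\|f(\widetilde{u}_h^2)-f(u^2)\|_{L^{3/2}}$ and the a priori bound strongly enough that, after dividing by $h^\eps$, the result is still $o(h^\eps)$. I would first try to use the rate $h^{k-\eps}$ from \eqref{3.5}, with (A4)–(A5) giving a Lipschitz-type bound on $f$ near $u$ (which is bounded by Lemma \ref{lem1}), to get $\|f(\widetilde{u}_h^2)-f(u^2)\|_{L^{3/2}}\lesssim h^{k-\eps}$. That yields $\|e_h\|_{1,h}\lesssim h^{k-2\eps}=o(h^\eps)$ for $k\ge1$ and small $\eps$. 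If only (A1)–(A3) may be used, I would fall back on a compactness and contradiction argument that exploits the uniqueness in Lemma \ref{lem1}.
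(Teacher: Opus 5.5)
\textbf{There is a genuine gap: the way you apply convexity in Step 2 leaves a term you cannot control with a rate under (A1)--(A3).}

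Your Step 2 identity is correct. But applying convexity at $\widetilde u_h^2$ leaves the cross term $\int_\Omega (f(\widetilde u_h^2)-f(u^2))(u_h^2-\widetilde u_h^2)\,d\Omega$. This term has no sign and is only linear in $e_h$. Combined with \eqref{3.8}, it gives at best
$h^\eps\|e_h\|_{1,h}\lesssim \|f(\widetilde u_h^2)-f(u^2)\|_{L^{3/2}}\,\|u_h+\widetilde u_h\|_{L^6}$.
To reach $o(h^\eps)$ you would therefore need $\|f(\widetilde u_h^2)-f(u^2)\|_{L^{3/2}}=o(h^{2\eps})$.

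Under (A1)--(A3), $f=F'$ is only continuous at $0$, with no modulus of continuity, and $u$ vanishes on $\partial\Omega$. Continuity plus (A3) therefore gives only $o(1)$, i.e.\ $\|e_h\|_{1,h}\lesssim h^{-\eps}o(1)$. That does not even tend to zero, so ``dividing through'' in Step 3 does not produce the decay you claim.

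Your remedies do not close this:
\begin{itemize}
\item Appealing to (A4)--(A5) uses hypotheses the lemma does not assume.
\item Even granting it, (A4) only bounds $tf'(t)$. So $\frac{d}{ds}f(s^2)=2s^2f'(s^2)/s$ may still blow up at $s=0$, and the claimed bound $h^{k-\eps}$ is not justified.
\item A compactness/contradiction argument yields qualitative convergence, never a rate.
\end{itemize}

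\textbf{The missing idea is to apply convexity at $u^2$ rather than at $\widetilde u_h^2$.} Set $R(v)=\int_\Omega [F(v^2)-F(u^2)-f(u^2)(v^2-u^2)]\,d\Omega$. Your remainder is exactly $R(u_h)-R(\widetilde u_h)$, and $R(u_h)\ge0$ by (A2), so that part can simply be dropped.

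The paper does this by comparing with the continuous energy,
\[
2(E_h(u_h)-E(u))=\langle (A_h^u-\widetilde\lambda_h)e_h,e_h\rangle+\widetilde\lambda_h-\lambda+R(u_h).
\]
It then bounds $E_h(u_h)-E(u)$ from above using the competitor $Q_hu$. The consistency estimate $E_h(Q_hu)-E(u)\lesssim h^k$ needs only that $F$ is Lipschitz on bounded sets. The term $\lambda-\widetilde\lambda_h$ is controlled by \eqref{3.4}. Together these give $\|e_h\|_{1,h}^2\lesssim h^{k-\eps}+h^{2k-3\eps}=o(h^{2\eps})$.

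Within your own framework the repair is even shorter:
\begin{itemize}
\item $E_h(u_h)\le E_h(\widetilde u_h)$, your identity and \eqref{3.8} give $h^\eps\|e_h\|_{1,h}^2\lesssim R(\widetilde u_h)$, which no longer involves $u_h$.
\item By the mean value theorem, (A3), $u\in L^\infty(\Omega)$, Lemma \ref{lem6} and \eqref{3.5}--\eqref{3.6}, $|R(\widetilde u_h)|\lesssim\|\widetilde u_h-u\|\lesssim h^{k+1-\eps}$.
\item Hence $\|e_h\|_{1,h}\lesssim h^{(k+1)/2-\eps}=o(h^\eps)$ for small $\eps$.
\end{itemize}
This route needs no rate for $f$ and no a priori bound on $u_h$.
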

\begin{proof}
	It follows from the definition \eqref{E} and \eqref{Eh} that
	\begin{eqnarray*}
		2(E_h(u_h)-E(u))&=a_h(u_h,u_h)+\int_\Omega F(u_h^2) d\Omega-a(u,u)-\int_\Omega F(u^2) d\Omega\\
		&=\la A_h^u u_h,u_h\ran -\la A^uu,u\ran+\int_\Omega F(u_h^2)-F(u^2)-f(u^2)(u_h^2-u^2) d\Omega.
	\end{eqnarray*}
	Then, a combination of Assumption (A2) and \eqref{3.3} implies that
	\begin{eqnarray*}
		2(E_h(u_h)-E(u))&\ge \la A_h^u u_h,u_h\ran -\la A^uu,u\ran\\
		&=\la (A_h^u-\widetilde{\lambda}_h) u_h,u_h\ran+\widetilde{\lambda}_h-\lambda\\
		&=\la (A_h^u-\widetilde{\lambda}_h) (u_h-\widetilde{u}_h),u_h-\widetilde{u}_h\ran+\widetilde{\lambda}_h-\lambda.
	\end{eqnarray*}
	Furthermore, by Theorem \ref{theo3.3} we arrive at
	\begin{eqnarray*}
		2(E_h(u_h)-E(u))\ge h^\eps\|u_h-\widetilde{u}_h\|_{1,h}^2+\widetilde{\lambda}_h-\lambda,
	\end{eqnarray*}
	which leads to
	\begin{eqnarray*}
		\|u_h-\widetilde{u}_h\|_{1,h}^2&\lesssim h^{-\eps}(E_h(u_h)-E(u))+h^{-\eps}(\lambda-\widetilde{\lambda}_h)\\
		&\lesssim h^{-\eps}(E_h(Q_hu)-E(u))+h^{2k-3\eps}.
	\end{eqnarray*}
	Next, we turn to estimate $E_h(Q_hu)-E(u)$. By the defnition \eqref{E} and \eqref{Eh}, we get
	\begin{equation}\label{3.14}
		\begin{aligned}
			2(E_h(Q_hu)-E(u))&=\|\nabla_wQ_hu\|^2-\|\nabla u\|^2+\|V^\frac{1}{2} Q_hu\|^2-\|V^\frac{1}{2} u\|^2\\
			&\quad+s(Q_hu,Q_hu)+\int_\Omega F((Q_hu)^2)-F(u^2)d\Omega.
		\end{aligned}
	\end{equation}
	From Lemma \ref{lem3} and \eqref{pro2}, the following estimate holds,
	\begin{equation}\label{3.15}
		\begin{aligned}
			\|\nabla_wQ_hu\|^2-\|\nabla u\|^2&=\|\Q_h\nabla u\|^2-\|\nabla u\|^2\\
			&=-\sumT\|\nabla u-\Q_h\nabla u\|_T^2\\
			&\lesssim h^{2k}\|u\|_{2k+1}^2.
		\end{aligned}
	\end{equation}
	Since $V\in L^2(\Omega)$ and $u\in L^\infty(\Omega)$, we have
	\begin{equation}\label{3.16}
		\begin{aligned}
			\left|\|V^\frac{1}{2} Q_hu\|^2-\|V^\frac{1}{2} u\|^2\right|&\lesssim \sumT\|\nabla(Q_hu-u)\|_T+\sumT\|Q_hu-u\|_T\\
			&\lesssim h^k.
		\end{aligned}
	\end{equation}
	By the trace inequality \eqref{trace} and \eqref{pro1}, we obtain
	\begin{equation}\label{3.17}
		\begin{aligned}
			s(Q_hu,Q_hu)&=\sumT h_T^{-1+\eps}\| Q_b(Q_0u-u)\|_{\partial T}^2\\
			&\le \sumT h_T^{-1+\eps}\| Q_0u-u\|_{\partial T}^2\\
			&\lesssim h^{2k+\eps}\|u\|_{k+1}^2.
		\end{aligned}
	\end{equation}
	Finally, we deduce from Assumption (A2) that
	\begin{equation}\label{3.18}
		\begin{aligned}
			\left|\int_\Omega F((Q_hu)^2)-F(u^2)d\Omega\right|&\lesssim \int_\Omega |(Q_hu)^2-u^2|d\Omega\\
			&\lesssim \sumT\|Q_hu-u\|_T\\
			&\lesssim h^{k+1}\|u\|_{k+1}.
		\end{aligned}
	\end{equation}
	Substituting \eqref{3.14}-\eqref{3.17} into \eqref{3.13}, it follows that
	\begin{eqnarray*}
		E_h(Q_hu)-E(u)\lesssim h^k,
	\end{eqnarray*}
	which implies that
	\begin{eqnarray*}
		E_h(Q_hu)-E(u)=o(h^{2\eps}),~ as~ h\rightarrow0.
	\end{eqnarray*}
	The proof is completed.
\end{proof}

\begin{theo}\label{theo5}
	Assume that the ground state $u\in H^{k+1}(\Omega)$, then under Assumptions (A1)-(A4), the following estimate holds true:
	\begin{eqnarray*}
		\lambda_h-\widetilde{\lambda}_h=o(h^{\eps}),~as~h\rightarrow0.
	\end{eqnarray*}
\end{theo}
\begin{proof}
	It follows from \eqref{1.3}, \eqref{2.4} and \eqref{3.3} that
	\begin{equation}\label{3.19}
		\begin{aligned}
			\lambda_h-\widetilde{\lambda}_h&=\la A_h^u u_h,u_h\ran-\widetilde{\lambda}_h+\int_\Omega (f(u_h^2)-f(u^2))u_h^2 d\Omega\\
			&=\la (A_h^u-\widetilde{\lambda}_h) (u_h-\widetilde{u}_h),u_h-\widetilde{u}_h\ran+\int_\Omega (f(u_h^2)-f(u^2))u_h^2 d\Omega.
		\end{aligned}
	\end{equation}
	Then, by Theorem \ref{theo1} we obtain
	\begin{eqnarray*}
		\Big|\la (A_h^u-\widetilde{\lambda}_h) (u_h-\widetilde{u}_h),u_h-\widetilde{u}_h\ran\Big|\lesssim\|u_h-\widetilde{u}_h\|_{1,h}^2.
	\end{eqnarray*}
	Moreover, from Assumption (A4) we demonstrate
	\begin{eqnarray*}
		\left|\int_\Omega (f(u_h^2)-f(u^2))u_h^2 d\Omega\right|&=\left|\int_\Omega f'(\xi_h^2)(u_h^2-u^2)u_h^2 d\Omega\right|\\
		&\lesssim \|u-u_h\|_{1,h}\\
		&\le\|u-\widetilde{u}_h\|_{1,h}+\|u_h-\widetilde{u}_h\|_{1,h}.
	\end{eqnarray*}
	Thus, combining \eqref{3.5} and Lemma \ref{theo4} and , we arrive at
	\begin{eqnarray*}
		|\lambda_h-\widetilde{\lambda}_h|\lesssim \|u-\widetilde{u}_h\|_{1,h}+\|u_h-\widetilde{u}_h\|_{1,h}=o(h^{\eps}),~ as ~h\rightarrow0.
	\end{eqnarray*}
	The proof is finished.
\end{proof}

\begin{lemma}
	Define $\widetilde{V}_h^\perp=\{v_h\in V_h:(v_h,\widetilde{u}_h)=0\}$. Then under Assumptions (A1)-(A5), we have the following estimate for each $w_h\in V_h$:
	\begin{equation}\label{3.20}
		\begin{aligned}
			h^\eps\frac{|(w_h,u_h-\widetilde{u}_h)|}{\|w_h\|}\lesssim& \|u-u_h\|^r_{L^{6r/(5-s)}}+|\lambda_h-\widetilde{\lambda}_h|\|u_h-\widetilde{u}_h\|\\
			\quad&+\|u-\widetilde{u}_h\|+\|u_h-\widetilde{u}_h\|^2.
		\end{aligned}
	\end{equation}
\end{lemma}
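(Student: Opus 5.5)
This is the analogue of Cancès–Chakir–Maday's duality estimate: we control $(w_h,u_h-\widetilde{u}_h)$ through an auxiliary problem posed with the operator $E_h''(u)-\widetilde{\lambda}_h$ restricted to $\widetilde{V}_h^\perp$.

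\textbf{Step 1: the auxiliary problem.} First project $w_h$ onto $\widetilde{V}_h^\perp$. Write $w_h^\perp=w_h-(w_h,\widetilde{u}_h)\widetilde{u}_h$, and split $(w_h,u_h-\widetilde{u}_h)$ into $(w_h^\perp,u_h-\widetilde{u}_h)$ and $(w_h,\widetilde{u}_h)(\widetilde{u}_h,u_h-\widetilde{u}_h)$. Since $\|u_h\|=\|\widetilde{u}_h\|=1$, we have $(\widetilde{u}_h,u_h-\widetilde{u}_h)=-\frac12\|u_h-\widetilde{u}_h\|^2$, so the second piece is bounded by $\|w_h\|\,\|u_h-\widetilde{u}_h\|^2$. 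This produces the last term of \eqref{3.20}.

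For the first piece, define $\psi_h\in\widetilde{V}_h^\perp$ by
\[
\la (E_h''(u)-\widetilde{\lambda}_h)\psi_h,v_h\ran=(w_h^\perp,v_h),\qquad\forall v_h\in\widetilde{V}_h^\perp .
\]
By Corollary \eqref{3.11} and Lax–Milgram this problem is well posed, with $h^\eps\|\psi_h\|_{1,h}^2\lesssim \|w_h\|\,\|\psi_h\|$. By Lemma \ref{lem6} this gives $h^\eps\|\psi_h\|_{1,h}\lesssim\|w_h\|$. This is where the factor $h^\eps$ on the left of \eqref{3.20} comes from.

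\textbf{Step 2: testing with the error.} Test with $v_h=(u_h-\widetilde{u}_h)^\perp$. Its difference from $u_h-\widetilde{u}_h$ is again $O(\|u_h-\widetilde{u}_h\|^2)$ in the $\widetilde{u}_h$-direction, which is harmless. So we must evaluate $\la (E_h''(u)-\widetilde{\lambda}_h)(u_h-\widetilde{u}_h),\psi_h\ran$. Subtract \eqref{3.3} from \eqref{2.4}, both tested with $\psi_h$:
\[
\la A_h^{u}(u_h-\widetilde{u}_h),\psi_h\ran+((f(u_h^2)-f(u^2))u_h,\psi_h)=\lambda_h(u_h,\psi_h)-\widetilde{\lambda}_h(\widetilde{u}_h,\psi_h).
\]
Since $(\widetilde{u}_h,\psi_h)=0$, the right-hand side equals $(\lambda_h-\widetilde{\lambda}_h)(u_h-\widetilde{u}_h,\psi_h)+\widetilde{\lambda}_h(u_h-\widetilde{u}_h,\psi_h)$. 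Hence
\[
\la (E_h''(u)-\widetilde{\lambda}_h)(u_h-\widetilde{u}_h),\psi_h\ran=(\lambda_h-\widetilde{\lambda}_h)(u_h-\widetilde{u}_h,\psi_h)-R,
\]
where
\[
R=\big(f(u_h^2)u_h-f(u^2)u_h-2f'(u^2)u^2(u_h-\widetilde{u}_h),\psi_h\big).
\]
The first term is bounded by $|\lambda_h-\widetilde{\lambda}_h|\,\|u_h-\widetilde{u}_h\|\,\|\psi_h\|$.

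\textbf{Step 3: the nonlinear remainder (main obstacle).} Insert $u$: replace $u_h-\widetilde{u}_h$ by $(u_h-u)+(u-\widetilde{u}_h)$. The piece $2(f'(u^2)u^2(u-\widetilde{u}_h),\psi_h)$ is bounded by $\|u-\widetilde{u}_h\|\,\|\psi_h\|$, because $f'(u^2)u^2\in L^\infty$ by (A4) and $u\in L^\infty$ from Lemma \ref{lem1}.

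The remaining piece is
\[
\big(f(u_h^2)u_h-f(u^2)u_h-2f'(u^2)u^2(u_h-u),\psi_h\big).
\]
Apply (A5) with $t_1=u$ and $R=\|u\|_{L^\infty}+1$. Its integrand is bounded by $C(1+|u_h|^s)|u_h-u|^r|\psi_h|$. Use H\"older with exponents chosen so that $|u_h-u|^r$ lands in $L^{6/(5-s)}$, $|u_h|^s$ in $L^{6/s}$, and $\psi_h$ in $L^6$; the exponents sum correctly since $(5-s)/6+s/6+1/6=1$. The factor $\|u_h\|_{L^6}^s$ is uniformly bounded by Lemma \ref{lem6}, because $\|u_h\|_{1,h}$ is bounded (this follows from Lemma \ref{theo4} and \eqref{3.5}). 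Also $\|\psi_h\|_{L^6}\lesssim\|\psi_h\|_{1,h}$. This gives the term $\|u-u_h\|^r_{L^{6r/(5-s)}}\|\psi_h\|_{1,h}$.

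\textbf{Step 4: combining.} Collecting the three pieces, the bound on $(w_h^\perp,u_h-\widetilde{u}_h)$ has the form
\[
\big(\text{right-hand side of \eqref{3.20}}\big)\,\|\psi_h\|_{1,h}.
\]
Multiply by $h^\eps$ and use $h^\eps\|\psi_h\|_{1,h}\lesssim\|w_h\|$, then add back the $\|u_h-\widetilde{u}_h\|^2$ piece from Step 1. This gives \eqref{3.20}.

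I expect the delicate points to be Step 3 and the bookkeeping of the $h^\eps$ factors. Step 3 is hard because (A5) must be applied with $t_1=u$ bounded and $t_2=u_h$ unbounded, and the exponents must be matched to the $L^p$ bound of Lemma \ref{lem6}. For the $h^\eps$ factors, I would aim at worst for an extra factor $h^{-\eps}$ on the first group of terms, then absorb it using $h^\eps\le1$ wherever possible.
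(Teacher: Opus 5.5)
Your proposal is correct and takes essentially the same route as the paper. It uses the same auxiliary problem for $\psi_h\in\widetilde{V}_h^\perp$ (your right-hand side $w_h^\perp$ agrees with $w_h$ on $\widetilde{V}_h^\perp$). It splits off the $O(\|u_h-\widetilde{u}_h\|^2)$ component along $\widetilde{u}_h$, subtracts \eqref{3.3} from \eqref{2.4}, and applies (A4)--(A5) with H\"older. It closes with the stability bound $h^\eps\|\psi_h\|_{1,h}\lesssim\|w_h\|$ from \eqref{3.11} and Lemma~\ref{lem6}. Your version is slightly more careful than the written proof: your remainder carries $u_h$ where the paper misprints $u_h^2$, and you justify both the uniform bound on $\|u_h\|_{L^6}$ and the requirement $t_1=u\in(0,R)$ in (A5).
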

\begin{proof}
	Consider the following auxiliary problem: Find $\psi_{w_h}\in \widetilde{V}_h^\perp$ such that
	\begin{eqnarray*}
		\la (E_h''(u) -\widetilde{\lambda}_h)\psi_{w_h}, v_h\ran=(w_h,v_h),\quad\forall  v_h\in \widetilde{V}_h^\perp.
	\end{eqnarray*}
	Then, it follows from $\|u_h\|$=$\|\widetilde{u}_h\|$=1 that
	\begin{eqnarray*}
		(w_h,u_h-\widetilde{u}_h)&=(w_h,u_h-(u_h,\widetilde{u}_h)\widetilde{u}_h)-\frac12\|u_h-\widetilde{u}_h\|^2(w_h,\widetilde{u}_h)
		\\
		&=\la (E_h''(u) -\widetilde{\lambda}_h)\psi_{w_h}, u_h-(u_h,\widetilde{u}_h)\widetilde{u}_h\ran-\frac12\|u_h-\widetilde{u}_h\|^2(w_h,\widetilde{u}_h)
		\\
		&=\la (E_h''(u) -\widetilde{\lambda}_h)(u_h-\widetilde{u}_h),\psi_{w_h} \ran-\frac12\|u_h-\widetilde{u}_h\|^2\la (E_h''(u) -\widetilde{\lambda}_h)\widetilde{u}_h,\psi_{w_h}\ran
		\\
		&\quad-\frac12\|u_h-\widetilde{u}_h\|^2(w_h,\widetilde{u}_h)
		\\
		&=\la (E_h''(u) -\widetilde{\lambda}_h)(u_h-\widetilde{u}_h),\psi_{w_h} \ran-\|u_h-\widetilde{u}_h\|^2\int_\Omega f'(u^2)u^2\widetilde{u}_h\psi_{w_h}d\Omega
		\\
		&\quad-\frac12\|u_h-\widetilde{u}_h\|^2(w_h,\widetilde{u}_h).
	\end{eqnarray*}
	Furthermore, by \eqref{2.4} and \eqref{3.3}, we have
	\begin{eqnarray*}
		\la (E_h''(u) -\widetilde{\lambda}_h)v_h,\psi_{w_h} \ran&=\la (A_h^u -\widetilde{\lambda}_h)v_h,\psi_{w_h} \ran+2\int_\Omega f'(u^2)u^2\psi_{w_h}v_h d\Omega
		\\
		&=(\lambda_h -\widetilde{\lambda}_h)(v_h,\psi_{w_h})-\int_\Omega(f(u_h^2)-f(u^2))u_h^2\psi_{w_h} d\Omega
		\\
		&\quad+2\int_\Omega f'(u^2)u^2\psi_{w_h}v_h d\Omega
		\\
		&=(\lambda_h -\widetilde{\lambda}_h)(v_h,\psi_{w_h})+2\int_\Omega f'(u^2)u^2\psi_{w_h}(u-\widetilde{u}_h) d\Omega
		\\
		&\quad-\int_\Omega \left((f(u_h^2)-f(u^2))u_h^2-2f'(u^2)u^2(u_h-u)\right)\psi_{w_h} d\Omega,
	\end{eqnarray*}
	where $v_h=u_h-\widetilde{u}_h$.
	
	Thus, together with Assumptions (A4)-(A5) we obtain
	\begin{eqnarray*}
		|(w_h,u_h-\widetilde{u}_h)|&\le \|u_h-\widetilde{u}_h\|^2\|\psi_{w_h}\|+|\lambda_h -\widetilde{\lambda}_h|\|u_h-\widetilde{u}_h\|\|\psi_{w_h}\|+\|u-\widetilde{u}_h\|\|\psi_{w_h}\|
		\\
		&\quad+\|u-u_h\|_{L^{6r/(5-s)}}^r\|\psi_{w_h}\|_{1,h}+\|u_h-\widetilde{u}_h\|^2\|w_h\|.
	\end{eqnarray*}
	By \eqref{3.11} and Lemma \ref{lem6}, we demonstrate
	\begin{eqnarray*}
		\|\psi_{w_h}\|\lesssim\|\psi_{w_h}\|_{1,h}\lesssim h^{-\eps}\|w_h\|,
	\end{eqnarray*}
	which leads to \eqref{3.7} and finishes the proof.	
\end{proof}

\begin{lemma}\label{coro3.3}
	Assume that the ground state $u\in H^{k+1}(\Omega)$, then under Assumptions (A1)-(A5), there holds the following estimate:
	\begin{eqnarray*}
		h^\eps\|u_h-\widetilde{u}_h\|\lesssim \|u_h-\widetilde{u}_h\|^r_{L^{6r/(5-s)}}+\|u-\widetilde{u}_h\|^r_{L^{6r/(5-s)}}+\|u-\widetilde{u}_h\|.
	\end{eqnarray*}
\end{lemma}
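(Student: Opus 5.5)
The plan is to apply the preceding lemma, estimate \eqref{3.20}, with the particular test function $w_h=u_h-\widetilde{u}_h$, and then absorb the lower-order terms on the right-hand side. With this choice the left-hand side of \eqref{3.20} becomes
\[
h^\eps\frac{|(w_h,u_h-\widetilde{u}_h)|}{\|w_h\|}=h^\eps\|u_h-\widetilde{u}_h\|,
\]
which is exactly the quantity to be bounded. The right-hand side then reads
\[
\|u-u_h\|^r_{L^{6r/(5-s)}}+|\lambda_h-\widetilde{\lambda}_h|\,\|u_h-\widetilde{u}_h\|+\|u-\widetilde{u}_h\|+\|u_h-\widetilde{u}_h\|^2 .
\]
If $u_h=\widetilde{u}_h$ there is nothing to prove, so we may assume $w_h\neq0$.

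\textbf{Step 1: split the nonlinear term.} Write $u-u_h=(u-\widetilde{u}_h)+(\widetilde{u}_h-u_h)$. Since $r\le2$, the elementary inequality $(a+b)^r\le 2^{r-1}(a^r+b^r)$ together with the triangle inequality in $L^{6r/(5-s)}$ gives
\[
\|u-u_h\|^r_{L^{6r/(5-s)}}\lesssim\|u_h-\widetilde{u}_h\|^r_{L^{6r/(5-s)}}+\|u-\widetilde{u}_h\|^r_{L^{6r/(5-s)}} .
\]
These are precisely the first two terms in the claimed bound.

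\textbf{Step 2: absorb the two terms linear in $\|u_h-\widetilde{u}_h\|$.} This is the delicate point, because the absorption must beat the factor $h^\eps$ on the left.

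\emph{The eigenvalue term.} By Theorem \ref{theo5}, $|\lambda_h-\widetilde{\lambda}_h|=o(h^\eps)$. Hence for $h$ sufficiently small,
\[
|\lambda_h-\widetilde{\lambda}_h|\,\|u_h-\widetilde{u}_h\|\le \tfrac14 h^\eps\|u_h-\widetilde{u}_h\| .
\]

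\emph{The quadratic term.} Write $\|u_h-\widetilde{u}_h\|^2=\|u_h-\widetilde{u}_h\|\cdot\|u_h-\widetilde{u}_h\|$. By Lemma \ref{lem6} with $p=2$ and Lemma \ref{theo4},
\[
\|u_h-\widetilde{u}_h\|\lesssim\|u_h-\widetilde{u}_h\|_{1,h}=o(h^\eps).
\]
So the quadratic term is also at most $\tfrac14 h^\eps\|u_h-\widetilde{u}_h\|$ for small $h$.

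Both absorptions rely essentially on the $o(h^\eps)$ statements, not merely $O(h^\eps)$, since the hidden constant in \eqref{3.20} must be beaten. I expect this to be the main obstacle: one has to check that the constants implied by $\lesssim$ in \eqref{3.20} are independent of $h$, so that the little-$o$ rates genuinely allow absorption. I would verify this by tracing the constants through the bound $\|\psi_{w_h}\|_{1,h}\lesssim h^{-\eps}\|w_h\|$ obtained from \eqref{3.11}.

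\textbf{Step 3: conclude.} After the absorptions in Step 2 we obtain
\[
\tfrac12 h^\eps\|u_h-\widetilde{u}_h\|\lesssim \|u_h-\widetilde{u}_h\|^r_{L^{6r/(5-s)}}+\|u-\widetilde{u}_h\|^r_{L^{6r/(5-s)}}+\|u-\widetilde{u}_h\| .
\]
Multiplying by $2$ gives the stated estimate for all sufficiently small $h$. For the remaining finitely many mesh sizes bounded away from zero, one can adjust the implicit constant, since all the quantities involved are bounded there.
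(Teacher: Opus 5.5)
Your proposal is correct and follows the paper's proof: take $w_h=u_h-\widetilde{u}_h$ in \eqref{3.20}, absorb the eigenvalue term and the quadratic term into the left-hand side using the $o(h^\eps)$ bounds of Theorem \ref{theo5} and Lemma \ref{theo4} (with Lemma \ref{lem6}), and split $\|u-u_h\|^r_{L^{6r/(5-s)}}$ by Minkowski's inequality. Your version also makes explicit the constant $2^{r-1}$ and the absorption step, which the paper leaves implicit. You should drop your closing remark about large $h$: as in the paper, the estimate is only meant for $h$ sufficiently small, and enlarging the constant for $h$ bounded away from zero would require the right-hand side to be bounded below, which you have not justified.
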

\begin{proof}
	By Taking $w_h=u_h-\widetilde{u}_h$ in \eqref{3.7}, we have
	\begin{equation*}
		\begin{aligned}
			h^\eps\|u_h-\widetilde{u}_h\|\lesssim \|u-u_h\|^r_{L^{6r/(5-s)}}+|\lambda_h-\widetilde{\lambda}_h|\|u_h-\widetilde{u}_h\|+\|u-\widetilde{u}_h\|+\|u_h-\widetilde{u}_h\|^2.
		\end{aligned}
	\end{equation*}
	Then, it follows from Lemma \ref{theo4} and Theorem \ref{theo5} that
	\begin{eqnarray*}
		h^\eps\|u_h-\widetilde{u}_h\|&\lesssim \|u-u_h\|^r_{L^{6r/(5-s)}}+\|u-\widetilde{u}_h\|\\
		&\le\|u_h-\widetilde{u}_h\|^r_{L^{6r/(5-s)}}+\|u-\widetilde{u}_h\|^r_{L^{6r/(5-s)}}+\|u-\widetilde{u}_h\|.
	\end{eqnarray*}
	The proof is completed.
\end{proof}

Then, we are ready to give the $L^2$ and $H^1$ error estimate for the ground state.
\begin{theo}\label{theo7}
	Let $(u_h,\lambda_h)$ be the first eigenpair of \eqref{2.4}. Assume that the ground state $u\in H^{k+1}(\Omega)$, then under Assumptions (A1)-(A5), the following estimate holds true:
	\begin{eqnarray*}
		%		\|u_h-\widetilde{u}_h\|\lesssim h^{-\eps}\|u-\widetilde{u}_h\|.
		\|u-u_h\|\lesssim h^{k+1-2\eps}.
	\end{eqnarray*}
\end{theo}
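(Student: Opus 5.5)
The plan is to split the error through the auxiliary eigenfunction, $\|u-u_h\|\le\|u-\widetilde{u}_h\|+\|u_h-\widetilde{u}_h\|$. The first term is already controlled by \eqref{3.6}, which gives $\|u-\widetilde{u}_h\|\lesssim h^{k+1-\eps}\|u\|_{k+1}\lesssim h^{k+1-2\eps}$. So it suffices to show $\|u_h-\widetilde{u}_h\|\lesssim h^{k+1-2\eps}$, and I will obtain this from Lemma \ref{coro3.3} by absorbing the nonlinear term on its right-hand side into the left-hand side.

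Write $p=6r/(5-s)$. Lemma \ref{lem6} bounds $L^p$ norms by $\|\cdot\|_{1,h}$ on $V_h$, and $u-\widetilde{u}_h$ can be handled by splitting it as $(u-Q_hu)+(Q_hu-\widetilde{u}_h)$. For the first term on the right of Lemma \ref{coro3.3}, I would not use $\|u_h-\widetilde{u}_h\|_{1,h}^r$ directly, since that only gives $\|u_h-\widetilde{u}_h\|_{1,h}$ times something. Instead I would interpolate. Because $r>1$ and $s\le 3-r$, we have $p\le 6$, so the two-dimensional Gagliardo--Nirenberg inequality (transferred to $V_h$ as in Lemma \ref{lem6}) gives
\[
\|v_h\|_{L^p}\lesssim\|v_h\|^{\theta}\|v_h\|_{1,h}^{1-\theta}, \qquad \theta=2/p .
\]
Hence
\[
\|u_h-\widetilde{u}_h\|_{L^p}^r\lesssim\|u_h-\widetilde{u}_h\|\cdot\|u_h-\widetilde{u}_h\|^{r\theta-1}\|u_h-\widetilde{u}_h\|_{1,h}^{r(1-\theta)}.
\]
By Lemma \ref{theo4} and Lemma \ref{lem6}, the factor multiplying $\|u_h-\widetilde{u}_h\|$ is $o(h^{\eps})$ once the exponents are arranged. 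More crudely, if $r\theta<1$, one can use $\|u_h-\widetilde{u}_h\|_{L^p}^{r-1}\lesssim\|u_h-\widetilde{u}_h\|_{1,h}^{r-1}=o(h^{\eps(r-1)})$ together with an $L^p$-to-$L^2$ comparison. For $h$ small this term is then bounded by $\frac12 h^{\eps}\|u_h-\widetilde{u}_h\|$ and can be absorbed into the left-hand side.

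The remaining terms involve only $u-\widetilde{u}_h$. Using Lemma \ref{lem6}, \eqref{3.5} and the projection estimates \eqref{pro1}, the quantity $\|u-\widetilde{u}_h\|_{L^p}^r$ is at most $(h^{k-\eps})^r$, which is $O(h^{k+1-\eps})$ when $r$ is close to 2 or $k$ is large enough. Ideally I would estimate it more sharply by $L^2$-type error interpolation, e.g.\ $\|u-\widetilde{u}_h\|_{L^p}\lesssim h^{k+1-\eps-2(1/2-1/p)}$, which with $r>1$ and the restriction on $s$ gives an exponent of at least $k+1-\eps$. Together with \eqref{3.6}, this yields
\[
h^{\eps}\|u_h-\widetilde{u}_h\|\lesssim h^{k+1-\eps},
\qquad\text{that is,}\qquad
\|u_h-\widetilde{u}_h\|\lesssim h^{k+1-2\eps},
\]
and the triangle inequality finishes the proof.

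The main obstacle is the absorption step: it requires an honest interpolation inequality for the discrete $L^p$ norm on $V_h$ and some care with the exponents, which rest on the constraints $1<r\le2$ and $0\le s\le 3-r$. A second, milder difficulty is obtaining an $L^p$ rate for $u-\widetilde{u}_h$ good enough that $\|u-\widetilde{u}_h\|_{L^p}^r$ does not dominate $h^{k+1-\eps}$. If that rate falls short, I would fall back on bounding this term by $h^{r(k-\eps)}$ and checking that it is still at most $h^{k+1-\eps}$ in the regime considered.
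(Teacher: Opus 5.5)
Your skeleton is the paper's: split through $\widetilde u_h$, use \eqref{3.6}, and absorb the $u_h-\widetilde u_h$ part of Lemma~\ref{coro3.3}. There is a genuine gap, however, in how you treat $\|u-\widetilde u_h\|_{L^p}^r$ with $p=6r/(5-s)$.

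The bound you actually derive, $(h^{k-\eps})^r$, is $O(h^{k+1-\eps})$ only when $(r-1)(k-\eps)\ge1$. This fails for $k=1$ with every admissible $r\le2$, which is the case used in Section~4. It also fails for $r$ close to $1$ with any $k$. So your fallback only gives $\|u_h-\widetilde u_h\|\lesssim h^{r(k-\eps)-\eps}$, not the theorem. The sharper rate $h^{k-\eps+2/p}$ that you call ``ideal'' is correct, but you leave it underived.

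The missing observation is that the interpolation you already use for $u_h-\widetilde u_h$ applies verbatim to $u-\widetilde u_h$ (splitting through $Q_hu$ if you want to stay in $V_h$). It gives
\[
\|u-\widetilde u_h\|_{L^p}^r\lesssim\|u-\widetilde u_h\|\,\|u-\widetilde u_h\|_{1,h}^{r-1}\lesssim\|u-\widetilde u_h\|,
\]
since $\|u-\widetilde u_h\|_{1,h}\to0$ by \eqref{3.5}. This is the paper's ``Similarly'' step. Every term of Lemma~\ref{coro3.3} not involving $u_h-\widetilde u_h$ is then $O(\|u-\widetilde u_h\|)$, so $\|u_h-\widetilde u_h\|\lesssim h^{-\eps}\|u-\widetilde u_h\|\lesssim h^{k+1-2\eps}$ by \eqref{3.6}. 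In fact, inserting \eqref{3.5}--\eqref{3.6} into your own Gagliardo--Nirenberg bound reproduces your ideal rate exactly, so you already had the tool.

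Two smaller points. First, the discrete Gagliardo--Nirenberg inequality you single out as the main obstacle is not needed. The paper uses H\"older interpolation between $L^2$ and $L^6$ together with Lemma~\ref{lem6}:
\[
\|v_h\|_{L^p}^r\le\|v_h\|^{(5-r-s)/2}\|v_h\|_{L^6}^{(3r-5+s)/2}\lesssim\|v_h\|\,\|v_h\|_{1,h}^{r-1}.
\]
Here $r+s\le3$ makes the $L^2$ exponent at least $1$. Note that $p\le3$, and if $p<2$ one simply uses $\|v_h\|_{L^p}\lesssim\|v_h\|$. Your case $r\theta<1$ never arises, since $r\theta=(5-s)/3>1$.

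Second, absorption needs $\|u_h-\widetilde u_h\|_{1,h}^{r-1}=o(h^\eps)$. Lemma~\ref{theo4} as stated only yields $o(h^{\eps(r-1)})$, which is weaker because $r-1\le1$. So your inference from $o(h^{\eps(r-1)})$ to a bound by $\frac12h^\eps$ does not follow. The paper passes over the same point. It is repaired by the rate $\|u_h-\widetilde u_h\|_{1,h}\lesssim h^{(k-\eps)/2}$ that the proof of Lemma~\ref{theo4} actually produces, with $\eps$ small relative to $r-1$.
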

\begin{proof}
	Since $r+s\le 3$, by the Young's inequality and Lemma \ref{lem6}, we obtain
	\begin{equation}\label{3.21}
		\begin{aligned}
			\|u_h-\widetilde{u}_h\|^r_{L^{6r/(5-s)}}&\le \|u_h-\widetilde{u}_h\|^{(5-r-s)/2}\|u_h-\widetilde{u}_h\|^{(3r-5+s)/2}_{L^6}\\
			&\le \|u_h-\widetilde{u}_h\|\|u_h-\widetilde{u}_h\|_{1,h}^{r-1}.
		\end{aligned}
	\end{equation}
	Similarly, we have
	\begin{eqnarray*}
		\|u-\widetilde{u}_h\|^r_{L^{6r/(5-s)}}\lesssim \|u-\widetilde{u}_h\|\|u-\widetilde{u}_h\|_{1,h}^{r-1}.
	\end{eqnarray*}
	
	Then, it follows from Lemma \ref{coro3.3} that
	\begin{eqnarray*}
		h^\eps\|u_h-\widetilde{u}_h\|\lesssim \|u_h-\widetilde{u}_h\|\|u_h-\widetilde{u}_h\|_{1,h}^{r-1}+\|u-\widetilde{u}_h\|\|u-\widetilde{u}_h\|_{1,h}^{r-1}+\|u-\widetilde{u}_h\|,
	\end{eqnarray*}
	which implies that
	\begin{eqnarray*}
		\|u_h-\widetilde{u}_h\|\lesssim h^{-\eps}\|u-\widetilde{u}_h\|.
	\end{eqnarray*}
	
	Finally, combining \eqref{3.6} and Lemma \ref{theo7}, we demonstrate
	\begin{eqnarray*}
		\|u-u_h\|\le \|u-\widetilde{u}_h\|+\|u_h-\widetilde{u}_h\|\lesssim h^{-\eps}\|u-\widetilde{u}_h\|\lesssim h^{k+1-2\eps},
	\end{eqnarray*}
	which finishes the proof.
\end{proof}

\begin{theo}\label{lem9}
	Let $(u_h,\lambda_h)$ be the first eigenpair of \eqref{2.4}. Under the conditions of Theorem \ref{theo7}, we have
	\begin{eqnarray*}
		\|u-u_h\|_{1,h}\lesssim h^{k-\eps}.
		%		\|u_h-\widetilde{u}_h\|_{1,h}\lesssim h^{k+1-2\eps}.
	\end{eqnarray*}
\end{theo}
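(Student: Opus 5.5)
We will split the error with the triangle inequality,
\[
\|u-u_h\|_{1,h}\le \|u-\widetilde{u}_h\|_{1,h}+\|u_h-\widetilde{u}_h\|_{1,h}.
\]
By \eqref{3.5}, the first term is already $\lesssim h^{k-\eps}$, so everything reduces to showing $\|u_h-\widetilde{u}_h\|_{1,h}\lesssim h^{k-\eps}$. We will get this by repeating the energy argument of Lemma \ref{theo4} quantitatively, using the coercivity \eqref{3.10}, the eigenvalue identity \eqref{3.19}, and the $L^2$ estimates already obtained in the proof of Theorem \ref{theo7}.

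The plan for the second term is as follows. Set $e_h=u_h-\widetilde{u}_h$. By \eqref{3.10},
\[
h^\eps\|e_h\|_{1,h}^2\le \la (A_h^u-\widetilde{\lambda}_h)e_h,e_h\ran+\eta\|e_h\|^2 .
\]
The identity \eqref{3.19} rewrites the first term on the right as
\[
\la (A_h^u-\widetilde{\lambda}_h)e_h,e_h\ran=\lambda_h-\widetilde{\lambda}_h-\int_\Omega (f(u_h^2)-f(u^2))u_h^2\,d\Omega .
\]
The aim is to show that this right-hand side is of order $\|e_h\|_{1,h}\cdot(\text{something small})$ plus $O(h^{2k-\eps})$.

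For the nonlinear integral, we will use Assumptions (A4)--(A5) together with $u\in L^\infty$. These give the bound $\|u-u_h\|\,\|u_h\|_{L^p}$, hence by Lemma \ref{lem6} a bound of $\lesssim\|u-u_h\|\lesssim h^{k+1-2\eps}$ from Theorem \ref{theo7}.

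For $\lambda_h-\widetilde{\lambda}_h$, we will not use the crude $o(h^\eps)$ of Theorem \ref{theo5}. Instead we go back to the energy comparison in Lemma \ref{theo4}. Since $u_h$ minimizes $E_h$ on the unit sphere of $V_h$, we have $E_h(u_h)\le E_h(\widetilde{u}_h/\|\widetilde{u}_h\|)$. Moreover $E_h(\widetilde{u}_h)-E(u)$ can be expanded as $\frac12(\widetilde{\lambda}_h-\lambda)$ plus a Taylor remainder of $F$ that is $O(\|u-\widetilde{u}_h\|)$, hence $O(h^{k+1-\eps})$ by \eqref{3.6}. Substituting this into the chain of inequalities in the proof of Lemma \ref{theo4} yields
\[
h^\eps\|e_h\|_{1,h}^2\lesssim (\lambda-\widetilde{\lambda}_h)+\|u-\widetilde{u}_h\|+\|u-u_h\|\lesssim h^{2k-2\eps}+h^{k+1-2\eps}.
\]

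The main obstacle is the order count. The linear-in-error term $\|u-u_h\|\sim h^{k+1-2\eps}$ only gives $\|e_h\|_{1,h}\lesssim h^{(k+1)/2-3\eps/2}$, which is too weak for $k\ge2$. To fix this, we will use the inf-sup/duality structure from \eqref{3.11} instead of plain energy. We test $(E_h''(u)-\widetilde{\lambda}_h)e_h$ against $e_h$, and write the residual exactly as in the proof of the lemma preceding Lemma \ref{coro3.3}: it is $(\lambda_h-\widetilde{\lambda}_h)(e_h,\cdot)+2\int f'(u^2)u^2(u-\widetilde{u}_h)\cdot$ plus the (A5)-remainder. Each term is bounded by $\|e_h\|_{1,h}$ times one of $|\lambda_h-\widetilde{\lambda}_h|\|e_h\|$, $\|u-\widetilde{u}_h\|$, or $\|u-u_h\|^r_{L^{6r/(5-s)}}$; the last is handled via \eqref{3.21}. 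Dividing by $\|e_h\|_{1,h}$ and using \eqref{3.11} gives
\[
h^\eps\|e_h\|_{1,h}\lesssim \|u-\widetilde{u}_h\|+\|u-u_h\|\,\|u-u_h\|_{1,h}^{r-1}+o(1)\|e_h\|\lesssim h^{k+1-2\eps}.
\]
Hence $\|e_h\|_{1,h}\lesssim h^{k+1-3\eps}\le h^{k-\eps}$ for $\eps<1/2$, which combined with \eqref{3.5} finishes the proof. A smaller point to check is that the term $\|u-u_h\|_{1,h}^{r-1}$ is bounded, which follows from Lemma \ref{theo4}.
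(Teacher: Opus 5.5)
Your final argument is essentially the paper's proof, and the energy-based first attempt can simply be dropped. Like the paper, you test the coercivity \eqref{3.11} with $e_h=u_h-\widetilde{u}_h$, split the residual into the $(\lambda_h-\widetilde{\lambda}_h)$ term, the $f'(u^2)u^2(u-\widetilde{u}_h)$ term and the (A5) remainder, and then use \eqref{3.21}, Theorem \ref{theo7}, and the boundedness of $\|u-u_h\|_{1,h}$ from \eqref{3.5} and Lemma \ref{theo4}. Your bound $\|e_h\|_{1,h}\lesssim h^{k+1-3\eps}$ is the correct count (the paper writes $h^{k+1-2\eps}$), and you are right that it is dominated by $h^{k-\eps}$ only when $\eps\le 1/2$.
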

\begin{proof}
	It follows from \eqref{3.3} and \eqref{3.11} that
	\begin{eqnarray*}
		h^\eps\|u_h-\widetilde{u}_h\|_{1,h}^2&\lesssim \la (E_h''(u)-\widetilde{\lambda}_h)(u_h-\widetilde{u}_h),u_h-\widetilde{u}_h\ran\\
		&=\la (A_h^u-\widetilde{\lambda}_h)(u_h-\widetilde{u}_h),u_h-\widetilde{u}_h\ran+2\int_\Omega f'(u^2)u^2(u_h-\widetilde{u}_h)^2d\Omega\\
		&=\la (A_h^u-\widetilde{\lambda}_h)u_h,u_h-\widetilde{u}_h\ran+\frac12(\lambda-\widetilde{\lambda}_h)\|u_h-\widetilde{u}_h\|^2\\
		&\quad+2\int_\Omega f'(u^2)u^2(u_h-\widetilde{u}_h)^2d\Omega\\
		&=2\int_\Omega \left((f(u^2)-f(u_h^2))u_h+f'(u^2)u^2(u_h-\widetilde{u}_h)\right)(u_h-\widetilde{u}_h)d\Omega\\
		&\quad+\frac12(\lambda-\widetilde{\lambda}_h)\|u_h-\widetilde{u}_h\|^2.
	\end{eqnarray*}
	
	Then, by Assumptions (A4)-(A5) we get
	\begin{eqnarray*}
		h^\eps\|u_h-\widetilde{u}_h\|_{1,h}^2&\lesssim \int_\Omega \left((f(u^2)-f(u_h^2))u_h+f'(u^2)u^2(u_h-u)\right)(u_h-\widetilde{u}_h)d\Omega\\
		&\quad+\|u-\widetilde{u}_h\|\|u_h-\widetilde{u}_h\|+(\lambda-\widetilde{\lambda}_h)\|u_h-\widetilde{u}_h\|^2\\
		&\lesssim \|u-u_h\|_{L^{6r/(5-s)}}^r\|u_h-\widetilde{u}_h\|_{1,h}+\|u-\widetilde{u}_h\|\|u_h-\widetilde{u}_h\|\\
		&\quad+(\lambda-\widetilde{\lambda}_h)\|u_h-\widetilde{u}_h\|^2.
	\end{eqnarray*}
	Furthermore, from \eqref{3.5}, \eqref{3.21} and Lemma \ref{lem6}, we have
	\begin{eqnarray*}
		h^\eps\|u_h-\widetilde{u}_h\|_{1,h}^2&\lesssim \|u-u_h\|\|u-u_h\|_{1,h}^{r-1}\|u_h-\widetilde{u}_h\|_{1,h}+\|u-\widetilde{u}_h\|\|u_h-\widetilde{u}_h\|\\
		&\lesssim\|u-u_h\|\|u-u_h\|_{1,h}^{r-1}\|u_h-\widetilde{u}_h\|_{1,h}+\|u-\widetilde{u}_h\|\|u_h-\widetilde{u}_h\|_{1,h},
	\end{eqnarray*}
	which implies that
	\begin{eqnarray*}
		h^\eps\|u_h-\widetilde{u}_h\|_{1,h}&\lesssim\|u-u_h\|\|u-u_h\|_{1,h}^{r-1}+\|u-\widetilde{u}_h\|\\
		&\le\|u-u_h\|\left(\|u_h-\widetilde{u}_h\|_{1,h}^{r-1}+\|u-\widetilde{u}_h\|_{1,h}^{r-1}\right)+\|u-\widetilde{u}_h\|.
	\end{eqnarray*}
	
	Finally, since $r>1$, by \eqref{3.5}, Theorem \ref{theo4} and Theorem \ref{theo7}, we arrive at
	\begin{eqnarray*}
		\|u_h-\widetilde{u}_h\|_{1,h}\lesssim h^{-\eps}(\|u-u_h\|+\|u-\widetilde{u}_h\|)
		\lesssim h^{k+1-2\eps},
	\end{eqnarray*}
	which together with \eqref{3.5} leads to
	\begin{eqnarray*}
		\|u-u_h\|_{1,h}\le \|u-\widetilde{u}_h\|_{1,h}+\|u_h-\widetilde{u}_h\|_{1,h}\lesssim h^{k-\eps}.
	\end{eqnarray*}
	The proof is completed.
\end{proof}

%\begin{theo}
%	$|\lambda_h-\widetilde{\lambda}_h|\lesssim \|u_h-\widetilde{u}_h\|_{1,h}$.
%\end{theo}

Next, we turn to the estimation of the ground state eigenvalue.
\begin{theo}\label{theo9}
	Let $(u_h,\lambda_h)$ be the first eigenpair of \eqref{2.4}. Under the conditions of Theorem \ref{theo7}, there we have
	\begin{eqnarray*}
		|\lambda-\lambda_h|\lesssim h^{k-\eps}.
	\end{eqnarray*}
\end{theo}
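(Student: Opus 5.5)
The plan is to split $\lambda-\lambda_h=(\lambda-\widetilde{\lambda}_h)+(\widetilde{\lambda}_h-\lambda_h)$. The first piece is already controlled by \eqref{3.4}, which gives $|\lambda-\widetilde{\lambda}_h|\lesssim h^{2k-2\eps}\le h^{k-\eps}$ for $h$ small. All the work is therefore in upgrading the qualitative bound $\lambda_h-\widetilde{\lambda}_h=o(h^\eps)$ of Theorem \ref{theo5} to the quantitative rate $h^{k-\eps}$, using the error estimates now available from Theorems \ref{theo7} and \ref{lem9}.

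For this we start again from the identity \eqref{3.19}:
\begin{eqnarray*}
\lambda_h-\widetilde{\lambda}_h=\la (A_h^u-\widetilde{\lambda}_h)(u_h-\widetilde{u}_h),u_h-\widetilde{u}_h\ran+\int_\Omega (f(u_h^2)-f(u^2))u_h^2\, d\Omega .
\end{eqnarray*}
For the first term we use the boundedness in Theorem \ref{theo1}, which bounds it by $\|u_h-\widetilde{u}_h\|_{1,h}^2$. The proof of Theorem \ref{lem9} showed $\|u_h-\widetilde{u}_h\|_{1,h}\lesssim h^{k+1-2\eps}$, so this term is $O(h^{2k+2-4\eps})$, which is negligible.

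For the nonlinear term we apply the mean value theorem: $f(u_h^2)-f(u^2)=f'(\xi_h^2)(u_h-u)(u_h+u)$ with $\xi_h$ between $u$ and $u_h$. We then need
\begin{eqnarray*}
\left|\int_\Omega f'(\xi_h^2)(u_h+u)u_h^2(u_h-u)\,d\Omega\right|\lesssim \|u-u_h\|,
\end{eqnarray*}
or, failing that, a bound by $\|u-u_h\|_{L^p}$ for some finite $p$. Controlling the factor $f'(\xi_h^2)(u_h+u)u_h^2$ is the main obstacle, because $u_h$ is not a priori uniformly bounded. The route I would try first is the following. On the set where $|u_h|\le 2\|u\|_{L^\infty}$, Assumption (A4) ($f'(t)t$ locally bounded) combined with $u\in L^\infty$ from Lemma \ref{lem1} bounds the weight. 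Where $|u_h|$ is large, the growth condition (A3) together with Lemma \ref{lem6} and H\"older's inequality should put the weight in $L^{p'}$ for some $p'<\infty$. This gives a bound of the form $\|u-u_h\|_{L^p}$ for some $p<\infty$. We then convert it using Lemma \ref{lem6} and \eqref{3.21}-type interpolation:
\begin{eqnarray*}
\|u-u_h\|_{L^p}\lesssim \|u-u_h\|_{1,h}\lesssim h^{k-\eps},
\end{eqnarray*}
by Theorem \ref{lem9}. If $u_h$ is uniformly bounded, the sharper bound $\|u-u_h\|\lesssim h^{k+1-2\eps}$ from Theorem \ref{theo7} applies instead, but $h^{k-\eps}$ is all the statement requires.

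Collecting the pieces, $|\lambda-\lambda_h|\lesssim h^{2k-2\eps}+h^{2k+2-4\eps}+h^{k-\eps}\lesssim h^{k-\eps}$ for $h$ sufficiently small, which is the claim.
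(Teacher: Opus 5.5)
Your proposal is correct and follows the paper's proof essentially step by step. You split via $\widetilde\lambda_h$ and use \eqref{3.4} for $\lambda-\widetilde\lambda_h$. Then you apply \eqref{3.19}, bound the quadratic term with Theorem \ref{theo1}, and bound the nonlinear term by $\|u-u_h\|_{L^p}\lesssim\|u-u_h\|_{1,h}\lesssim h^{k-\eps}$ via Lemma \ref{lem6} and Theorem \ref{lem9}.

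The only difference is that the paper cites (34) of \cite{Eric2009} for $\bigl|\int_\Omega (f(u_h^2)-f(u^2))u_h^2\,d\Omega\bigr|\lesssim\|u-u_h\|_{L^{6/(5-2q)}}$, which is exactly what your (A3)/(A4)/H\"older sketch reconstructs. One step of that sketch needs more than you wrote. On $\{|u_h|\le 2\|u\|_{L^\infty}\}$, (A4) controls $f'(t)t$, not $f'(\xi_h^2)u_h^2$, so you need a case split:
\begin{itemize}
\item If $|u|\ge|u_h|/2$, then $\xi_h\ge|u_h|/2$, so $f'(\xi_h^2)u_h^2\le 4f'(\xi_h^2)\xi_h^2$, which (A4) bounds.
\item If $|u|<|u_h|/2$, then $|u_h-u|\ge|u_h|/2$. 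Use boundedness of $f$ on $[0,4\|u\|_{L^\infty}^2]$ directly instead of the mean value form.
\end{itemize}
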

\begin{proof}
	It follows from (34) in \cite{Eric2009} that
	\begin{eqnarray*}
		\left|\int_\Omega (f(u_h^2)-f(u^2))u_h^2 d\Omega\right|\lesssim\|u-u_h\|_{L^{6/(5-2q)}}.
	\end{eqnarray*}
	Then, by Theorem \ref{theo1}, Lemma \ref{lem6} and \eqref{3.19} we arrive at
	\begin{eqnarray*}
		|\lambda_h-\widetilde{\lambda}_h|&\le|\la (A_h^u-\widetilde{\lambda}_h)(u_h-\widetilde{u}_h),u_h-\widetilde{u}_h\ran|+\left|\int_\Omega (f(u_h^2)-f(u^2))u_h^2 d\Omega\right|
		\\
		&\lesssim \|u_h-\widetilde{u}_h\|_{1,h}^2+\|u-u_h\|_{L^{6/(5-2q)}}
		\\
		&\lesssim \|u_h-\widetilde{u}_h\|_{1,h}^2+\|u-u_h\|_{1,h}.
	\end{eqnarray*}
	Thus, from \eqref{3.4}, Lemma \ref{lem9} and Theorem \ref{theo9}, we obtain
	\begin{eqnarray*}
		|\lambda-\lambda_h|&\le|\lambda-\widetilde{\lambda}_h|+|\lambda_h-\widetilde{\lambda}_h|
		\\
		&\lesssim|\lambda-\widetilde{\lambda}_h|+\|u_h-\widetilde{u}_h\|_{1,h}^2+\|u-u_h\|_{1,h}
		\\
		&\lesssim h^{k-\eps}.
	\end{eqnarray*}
	The proof is finished.
\end{proof}

Finally, the following result shows the WG approximation gives asymptotic lower bound for the ground state energy.
\begin{theo}\label{theo10}
	Under the conditions of Theorem \ref{theo7}, we have
	\begin{eqnarray*}
		0\le E(u)-E_h(u_h)\lesssim h^{2k-2\eps},
	\end{eqnarray*}
	when h is sufficiently small.
\end{theo}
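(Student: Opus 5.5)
We prove the two inequalities separately: first the upper bound $E(u)-E_h(u_h)\lesssim h^{2k-2\eps}$, then the lower bound $E_h(u_h)\le E(u)$.

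\textbf{Upper bound.} Start from the identity used in the proof of Lemma \ref{theo4}:
\begin{eqnarray*}
2(E_h(u_h)-E(u))=\la (A_h^u-\widetilde{\lambda}_h)(u_h-\widetilde{u}_h),u_h-\widetilde{u}_h\ran+\widetilde{\lambda}_h-\lambda+\int_\Omega \big(F(u_h^2)-F(u^2)-f(u^2)(u_h^2-u^2)\big)d\Omega.
\end{eqnarray*}
By Theorem \ref{theo2} the first term is nonnegative. By the convexity of $F$ (Assumption (A2)) the integral is nonnegative. Hence
\[
E(u)-E_h(u_h)\le \tfrac12(\lambda-\widetilde{\lambda}_h)\lesssim h^{2k-2\eps}
\]
by \eqref{3.4}.

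\textbf{Lower bound, setup.} Showing $E_h(u_h)\le E(u)$ requires comparing the discrete minimum with the continuous one. Since $u_h$ is the discrete ground state, it minimizes $E_h$ over the unit sphere of $V_h$. Take the normalized projection $w_h=Q_hu/\|Q_hu\|$ as a competitor, so that $E_h(u_h)\le E_h(w_h)$. It then suffices to show $E_h(w_h)\le E(u)$ for small $h$.

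\textbf{Lower bound, sign of the key terms.} Write $\|Q_hu\|^2=1-\|u-Q_hu\|^2\le 1$, and expand $E_h(w_h)-E(u)$ using \eqref{3.14}--\eqref{3.17}. The decisive term is the gradient contribution from \eqref{3.15}, $-\frac12\sumT\|\nabla u-\Q_h\nabla u\|_T^2$. This is negative and of exact order $h^{2k}$; its size is bounded below by the saturation argument that yields the lower estimate $h^{2k}\lesssim\lambda-\widetilde{\lambda}_h$ in \eqref{3.4}. It must dominate the positive stabilizer term $s(Q_hu,Q_hu)\lesssim h^{2k+\eps}$. The potential and nonlinear terms must then be shown to be of higher order, $o(h^{2k})$.

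\textbf{Lower bound, higher-order terms.} Since $Q_0$ is the $L^2$ projection onto $P_k\supset P_0$, the difference $(VQ_hu,Q_hu)-(Vu,u)$ can be rewritten via $u-Q_0u$ orthogonality against piecewise constants, giving $O(h^{k+1}\cdot h)$ when $V$ and $u$ are smooth enough. The $F$-term is handled by Taylor expansion. First-order parts combine with the normalization factor $\|Q_hu\|^{-2}$ through the Euler–Lagrange equation $\la A^uu,v\ran=\lambda(u,v)$, leaving only quadratic remainders in $u-Q_hu$, which are $O(h^{2k+2})$.

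\textbf{Expected obstacle.} The main difficulty is exactly this last bookkeeping. The paper's cruder bounds \eqref{3.16} and \eqref{3.18} only give $O(h^k)$, which is too weak. If sharpening them fails, the fallback is the route of Theorem \ref{theo2}: there the coercivity structure gives $E_h(u_h)-E(u)\ge\frac12(\widetilde{\lambda}_h-\lambda)+\frac12\la(A_h^u-\widetilde{\lambda}_h)(u_h-\widetilde{u}_h),u_h-\widetilde{u}_h\ran$. One would then need the first term, which is negative of order $h^{2k}$, to dominate. In either formulation, the sign is decided by the strict $h^{2k}$ lower bound in \eqref{3.4} beating every $\eps$-weighted positive contribution, and this is the step to verify most carefully.
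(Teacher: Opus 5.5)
Your upper bound $E(u)-E_h(u_h)\le\frac12(\lambda-\widetilde{\lambda}_h)\lesssim h^{2k-2\eps}$ is exactly the paper's argument and is fine. The gap is the sign $E_h(u_h)\le E(u)$, which is the real content of the theorem. Your proposal does not establish it.

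\textbf{How the paper gets the sign.} The paper uses the \emph{same} identity, read as an equality. Set $P=\la(A_h^u-\widetilde{\lambda}_h)(u_h-\widetilde{u}_h),u_h-\widetilde{u}_h\ran$ and $N=\int_\Omega \big(F(u_h^2)-F(u^2)-f(u^2)(u_h^2-u^2)\big)d\Omega$. Then $2(E(u)-E_h(u_h))=(\lambda-\widetilde{\lambda}_h)-P-N$. What is needed are \emph{upper} bounds $P,N=o(h^{2k})$, to be beaten by $\lambda-\widetilde{\lambda}_h\gtrsim h^{2k}$ from \eqref{3.4}. These come from the error analysis. First, $P\lesssim\|u_h-\widetilde{u}_h\|_{1,h}^2$ by Theorem \ref{theo1}, combined with the superclose bound $\|u_h-\widetilde{u}_h\|_{1,h}\lesssim h^{k+1-2\eps}$ derived inside the proof of Theorem \ref{lem9}. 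Second, $N\lesssim\|u-u_h\|^2\lesssim h^{2k+2-4\eps}$ by Theorem \ref{theo7}.

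\textbf{Your fallback.} It never brings in these estimates. As written, it is also an inequality in the wrong direction. The lower bound $E_h(u_h)-E(u)\ge\frac12(\widetilde{\lambda}_h-\lambda)+\frac12P$, obtained by dropping $N\ge0$, cannot show $E_h(u_h)-E(u)\le0$, however negative its right-hand side is.

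\textbf{Your main route.} Testing with $w_h=Q_hu/\|Q_hu\|$ is genuinely different. If completed, it would bypass Theorems \ref{theo7} and \ref{lem9}. The price is needing $u_h$ to be the global minimizer of $E_h$ on the unit sphere of $V_h$, and a saturation bound $\sumT\|\nabla u-\Q_h\nabla u\|_T^2\gtrsim h^{2k}$. But the step you leave open fails as you describe it.
\begin{itemize}
\item The first-order term $2(Vu+f(u^2)u,Q_0u-u)$ cannot be reduced to quadratic remainders via the Euler--Lagrange equation. That equation holds only for test functions in $H_0^1(\Omega)$, and $Q_hu-u$ is not one.
\item What does work is the $L^2$-orthogonality of $Q_0$. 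With $g=Vu+f(u^2)u=\lambda u+\di(\A\nabla u)\in H^{k-1}$, one gets $(g,Q_0u-u)=(g-Q_0g,Q_0u-u)=O(h^{k-1}\cdot h^{k+1})=O(h^{2k})$.
\item That is the \emph{same} order as the decisive negative term, so the sign is not decided.
\item Your own bound $O(h^{k+2})$ is not $o(h^{2k})$ once $k\ge2$, and the hypotheses only give $V\in L^2(\Omega)$, not the smoothness you invoke.
\end{itemize}
Closing this route needs an extra argument you do not supply. Options are a density argument giving $\|g-Q_0g\|=o(h^{k-1})$, or additional regularity of $u$ and $V$.
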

\begin{proof}
	From \eqref{E}, \eqref{Eh} and \eqref{3.3}, we derive
	\begin{equation}\label{3.22}
		\begin{aligned}
			2(E_h(u_h)-E(u))&=\la (A_h^u-\widetilde{\lambda}_h) (u_h-\widetilde{u}_h),u_h-\widetilde{u}_h\ran+\widetilde{\lambda}_h-\lambda\\
			&\quad+\int_\Omega F(u_h^2)-F(u^2)-f(u^2)(u_h^2-u^2) d\Omega.
		\end{aligned}
	\end{equation}
	Then, a combination of Theorem \ref{theo2} and Lemma \ref{lem9} leads to
	\begin{eqnarray}\label{3.23}
		0\le\la (A_h^u-\widetilde{\lambda}_h) (u_h-\widetilde{u}_h),u_h-\widetilde{u}_h\ran\lesssim h^{2k+2-4\eps}.
	\end{eqnarray}
	Since $f'(t)\ge0$, by Theorem \ref{theo7} we obtain
	\begin{eqnarray}\label{3.24}
		0\le\int_\Omega F(u_h^2)-F(u^2)-f(u^2)(u_h^2-u^2) d\Omega\lesssim \|u-u_h\|^2\lesssim h^{2k+2-4\eps}.
	\end{eqnarray}
	Finally, substituting \eqref{3.4}, \eqref{3.23}-\eqref{3.24} into \eqref{3.22}, we get
	\begin{eqnarray*}
		0\le h^{2k}+O(h^{2k+2-4\eps})\lesssim E(u)-E_h(u_h)\lesssim h^{2k-2\eps},
	\end{eqnarray*}
	when $h$ is sufficiently small. The proof is finished.
\end{proof}

\section{Lower bound and upper bound for the eigenvalue}
In this section, we apply the post-processing technique to give the lower bound and upper bound for the ground state eigenvalue of the GPE type, i.e. $F(v^2)=\frac{\beta}{2}v^4$ in \eqref{E}.

\begin{lemma}\label{lem10}
	Let $(\lambda_{1,h},u_{1,h})$ and $(\lambda_{2,h},u_{2,h})$ be the linear and quadratic Lagrange conforming element approximations for \eqref{1.3}, respectively. Then, there holds the following estimates
	\begin{eqnarray*}
		&|\lambda-\lambda_{,h}|\lesssim h^2,\quad \|u-u_{1,h}\|_1\lesssim h,\quad \|u-u_{1,h}\|_1^2\lesssim E_{1,h}-E\lesssim h^2,\\
		&|\lambda-\lambda_{2,h}|\lesssim h^4,\quad \|u-u_{2,h}\|_1\lesssim h^2,\quad \|u-u_{2,h}\|_1^2\lesssim E_{2,h}-E\lesssim h^4,
		%	 	|\lambda-\lambda_{j,h}|\lesssim h^{2j},~ \|u-u_{j,h}\|_1\lesssim h^j,~ \|u-u_{j,h}\|_1^2\lesssim E-E_{1,h}\lesssim h^{2j},\quad j=1,2,
	\end{eqnarray*}
	where
	\begin{eqnarray*}
		E_{j,h}=\frac12 a(u_{j,h},u_{j,h})+\frac{\beta}{4}\|u_{j,h}\|_{L^4},\quad j=1,2.
	\end{eqnarray*}
\end{lemma}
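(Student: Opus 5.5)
This is the standard a priori theory of Cancès, Chakir and Maday \cite{Eric2009} for conforming discretizations of \eqref{1.1}, specialized to $F(t)=\frac{\beta}{2}t^2$, i.e. $f(t)=\beta t$. So the plan is to check that the GPE nonlinearity satisfies Assumptions (A1)--(A5), quote their estimates, and then read off the rates for $P_1$ and $P_2$ Lagrange elements.

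\textbf{Assumptions.} We have $F''=\beta>0$, which gives (A2). Next $|F'(t)|\lesssim 1+t$, so (A3) holds with $q=1<2$. The function $F''(t)t=\beta t$ is locally bounded, giving (A4). For (A5), the quantity $F'(t_2^2)t_2-F'(t_1^2)t_2-2F''(t_1^2)t_1^2(t_2-t_1)$ is an explicit cubic polynomial. It vanishes to second order at $t_2=t_1$. For $t_1\in(0,R)$ it is therefore bounded by $C(1+|t_2|)|t_2-t_1|^2$, so (A5) holds with $r=2$ and $s=1\le 3-r$.

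\textbf{Conforming error estimates.} Let $V_{j,h}$ be the $P_j$ Lagrange space, and let $u_{j,h}$ be the discrete minimizer with $(u_{j,h},u)\ge0$. Theorem 1 of \cite{Eric2009} gives
\[
\|u-u_{j,h}\|_1\lesssim \inf_{v_h\in V_{j,h}}\|u-v_h\|_1,\qquad |\lambda-\lambda_{j,h}|\lesssim \|u-u_{j,h}\|_1^2+\|u-u_{j,h}\|_{L^2}.
\]
It also gives an Aubin--Nitsche bound $\|u-u_{j,h}\|_{L^2}\lesssim h\|u-u_{j,h}\|_1$ on the convex domain. Assuming $u\in H^{j+1}(\Omega)$, interpolation gives $\|u-u_{j,h}\|_1\lesssim h^j$ and $\|u-u_{j,h}\|\lesssim h^{j+1}$. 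For the GPE case the eigenvalue bound sharpens to $|\lambda-\lambda_{j,h}|\lesssim h^{2j}$, because the $L^2$ term enters only through a negative-norm quantity. To justify this, I would expand $\lambda_{j,h}-\lambda$ as
\[
\lambda_{j,h}-\lambda=\langle (A^u-\lambda)(u_{j,h}-u),u_{j,h}-u\rangle+\beta\int_\Omega (u_{j,h}^2-u^2)u_{j,h}^2\,d\Omega.
\]
The first term is $O(h^{2j})$. The second term has leading part $2\beta\int_\Omega u^3(u_{j,h}-u)\,d\Omega$, a dual pairing that is $O(h^{2j})$ by the $H^{-1}$-type estimate in \cite{Eric2009}.

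\textbf{Energy estimates.} The energy bounds follow from the conforming minimization property $E\le E_{j,h}$ and a Taylor expansion at the minimizer:
\[
E_{j,h}-E=\tfrac12\langle (E''(u)-\lambda)(u_{j,h}-u),u_{j,h}-u\rangle+O(\|u-u_{j,h}\|_1^3),
\]
where I have used $\|u_{j,h}\|=\|u\|=1$ to absorb the constraint. The coercivity of $E''(u)-\lambda$ on $u^\perp$, up to the $L^2$ correction, is Lemma 1 of \cite{Eric2009}, and it gives $\|u-u_{j,h}\|_1^2\lesssim E_{j,h}-E$. Continuity of the same form gives $E_{j,h}-E\lesssim \|u-u_{j,h}\|_1^2\lesssim h^{2j}$.

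The main obstacle is the eigenvalue rate $h^{2j}$ rather than the cruder $h^{j+1}$ that the $L^2$ term would suggest. I would handle it by citing the refined estimate of \cite{Eric2009} for $F(t)=\frac{\beta}{2}t^2$, in which the nonlinear term is controlled by $\|u-u_{j,h}\|_{H^{-1}}\lesssim h^{2j}$ via a duality argument with the linearized operator $E''(u)-\lambda$ restricted to $u^\perp$.
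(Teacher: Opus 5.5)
Your proposal is correct and takes the same route as the paper, whose proof is just a citation of Theorem 3 in \cite{Eric2009}. What you add is a correct check that $F(t)=\frac{\beta}{2}t^2$ satisfies (A2)--(A5) with $q=1$, $r=2$, $s=1$. You also explain accurately why the $P_2$ eigenvalue rate $h^4$ needs the negative-norm bound $\|u-u_{2,h}\|_{H^{-1}}\lesssim h^4$ rather than the $L^2$ bound, which is what the cited result supplies, provided the extra regularity (e.g.\ $u\in H^{j+1}(\Omega)$) that you rightly make explicit holds.
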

\begin{proof}
	See Theorem 3 in \cite{Eric2009}. 
\end{proof}

Let $(\lambda_h,u_h)$ by the WG approximation with $k=1$  for \eqref{1.3}. From \eqref{E} and \eqref{1.3}, it is easy to check the ground state $(\lambda,u)$ and $E$ satisfy
\begin{eqnarray*}
	\lambda=2E+\frac{\beta}{2}\|u\|_{L^4}^4.
\end{eqnarray*}
Similarly, we have
\begin{eqnarray*}
	\lambda_h=2E_h+\frac{\beta}{2}\|u_h\|_{L^4}^4,\quad
	\lambda_{j,h}=2E_{j,h}+\frac{\beta}{2}\|u_{j,h}\|_{L^4}^4,\quad j=1,2.
\end{eqnarray*}
Thus, we obtain
\begin{eqnarray}
	\lambda-\lambda_h&=2(E-E_h)+\frac{\beta}{2}(\|u\|_{L^4}^4-\|u_h\|_{L^4}^4),\label{4.1}\\
	\lambda_{j,h}-\lambda&=2(E_{j,h}-E)+\frac{\beta}{2}(\|u_{j,h}\|_{L^4}^4-\|u\|_{L^4}^4), \quad j=1,2.\label{4.2}
\end{eqnarray}
\begin{remark}
	Combining Lemma \ref{lem10}, Lemma \ref{lem11} and \ref{4.2}, we can derive
	\begin{eqnarray}\label{4.3}
		\frac{\beta}{2}\big|\|u_{j,h}\|_{L^4}^4-\|u\|_{L^4}^4\big|\lesssim h^{2j},\quad j=1,2.
	\end{eqnarray}
\end{remark}

Now, we are ready to give the lower bound for the ground state energy.
\begin{theo}
	Let $(\lambda_h,u_h)$ and $(\lambda_{2,h},u_{2,h})$ be the WG approximation with $k=1$ and quadratic Lagrange conforming element approximation for \eqref{1.3}. Then, when $h$ is sufficiently small, we have
	\begin{eqnarray*}
		0\le\lambda-\underline{\lambda}_h\lesssim h^{2-2\eps},
	\end{eqnarray*}
	where $\underline{\lambda}_h=\lambda_h+\frac{\beta}{2}(\|u_{2,h}\|_{L^4}-\|u_h\|_{L^4})$.
\end{theo}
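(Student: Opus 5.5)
The plan is to use the identity \eqref{4.1} and, in the definition of $\underline{\lambda}_h$, to read the correction term as $\frac{\beta}{2}(\|u_{2,h}\|_{L^4}^4-\|u_h\|_{L^4}^4)$, the fourth powers appearing in \eqref{4.1}–\eqref{4.3}. With this reading,
\begin{eqnarray*}
\lambda-\underline{\lambda}_h&=&\lambda-\lambda_h-\frac{\beta}{2}\big(\|u_{2,h}\|_{L^4}^4-\|u_h\|_{L^4}^4\big)\\
&=&2(E-E_h)+\frac{\beta}{2}\big(\|u\|_{L^4}^4-\|u_{2,h}\|_{L^4}^4\big),
\end{eqnarray*}
where the second line follows from \eqref{4.1}, because the $\|u_h\|_{L^4}^4$ terms cancel. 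The difference therefore splits into an energy gap for the WG scheme and an $L^4$-norm gap for the quadratic conforming solution. Everything else needed is already in Theorem \ref{theo10} and \eqref{4.3}.

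For the first term, Theorem \ref{theo10} with $k=1$ gives $h^{2}\lesssim E-E_h\lesssim h^{2-2\eps}$ for $h$ sufficiently small. The lower bound $E-E_h\gtrsim h^2$ is the one coming from \eqref{3.4}: in \eqref{3.22} the quantity $\lambda-\widetilde\lambda_h\gtrsim h^{2k}$ dominates the nonnegative perturbations of order $h^{2k+2-4\eps}$. For the second term, \eqref{4.3} with $j=2$ gives $\frac{\beta}{2}\big|\|u_{2,h}\|_{L^4}^4-\|u\|_{L^4}^4\big|\lesssim h^4$. I would justify \eqref{4.3} from \eqref{4.2}, Lemma \ref{lem10} ($|\lambda-\lambda_{2,h}|\lesssim h^4$, $E_{2,h}-E\lesssim h^4$) and the triangle inequality.

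Combining the two bounds gives
\begin{eqnarray*}
c_1h^2-C_2h^4\le\lambda-\underline{\lambda}_h\le C_3h^{2-2\eps}+C_2h^4 .
\end{eqnarray*}
Since $h^4=o(h^2)$, the left-hand side is nonnegative for $h$ sufficiently small. The right-hand side is $\lesssim h^{2-2\eps}$, which proves the claim.

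The main obstacle is the sign. The proof needs a genuinely quantitative lower bound $E-E_h\gtrsim h^2$, not merely $E-E_h\ge0$, so that the $O(h^4)$ error from the quadratic element cannot flip the sign. This rests on the lower estimate in \eqref{3.4}, together with the fact that the perturbation terms in \eqref{3.22} are of strictly higher order, namely $h^{4-4\eps}$ with $\eps<1/2$. I would check this carefully when tracking constants in Theorem \ref{theo10}.
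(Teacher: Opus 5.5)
Your proposal is correct and essentially the paper's proof: you use the same cancellation via \eqref{4.1} to get $\lambda-\underline{\lambda}_h=2(E-E_h)+\frac{\beta}{2}(\|u\|_{L^4}^4-\|u_{2,h}\|_{L^4}^4)$, then combine $h^2\lesssim E-E_h\lesssim h^{2-2\eps}$ from Theorem \ref{theo10} with the $O(h^4)$ bound \eqref{4.3}. Your reading of the correction term with fourth powers is what the paper's proof implicitly assumes, and you are right that the needed lower bound $E-E_h\gtrsim h^2$ comes from the proof rather than the statement of Theorem \ref{theo10} and requires $\eps<1/2$; the paper leaves both points implicit.
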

\begin{proof}
	It follows from \eqref{4.1} that
	\begin{eqnarray*}
		\lambda-\lambda_h&=2(E-E_h)+\frac{\beta}{2}(\|u\|_{L^4}^4-\|u_h\|_{L^4}^4)
		\\
		&=2(E-E_h)+\frac{\beta}{2}(\|u\|_{L^4}^4-\|u_{2,h}\|_{L^4}^4)+\frac{\beta}{2}(\|u_{2,h}\|_{L^4}^4-\|u_h\|_{L^4}^4),
	\end{eqnarray*}
	which leads to
	\begin{eqnarray*}
		\lambda-\underline{\lambda}_h= 2(E-E_h)+\frac{\beta}{2}(\|u\|_{L^4}^4-\|u_{2,h}\|_{L^4}^4).
	\end{eqnarray*}
	By Theorem \ref{theo10} and \eqref{4.3} we obtain
	\begin{eqnarray*}
		h^2\lesssim E-E_h\lesssim h^{2-2\eps},\quad \frac{\beta}{2}\big|\|u\|_{L^4}^4-\|u_{2,h}\|_{L^4}^4\big|\lesssim h^4,
	\end{eqnarray*}
	which implies
	\begin{eqnarray*}
		0\le\lambda-\underline{\lambda}_h\lesssim h^{2-2\eps},
	\end{eqnarray*}
	when $h$ is sufficiently small. The proof is completed. 
\end{proof}

Next, we turn to the upper bound of the ground state eigenvalue. The following lower bound estimate is crucial in the analysis.
%Moreover, we have the following lower bound estimate for the energy.
\begin{lemma}\label{lem11}
	Let $(\lambda_{1,h},u_{1,h})$ be the linear Lagrange conforming element approximations for \eqref{1.3}. Then, we have
	\begin{eqnarray*}
		E_{1,h}-E\gtrsim h^{2}.
	\end{eqnarray*}
\end{lemma}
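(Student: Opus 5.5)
We prove the lower bound $E_{1,h}-E\gtrsim h^2$ by reducing it, through the energy identity, to the linear saturation phenomenon for piecewise linear functions. Throughout, $u$ is the positive ground state and $u_{1,h}$ the linear conforming approximation, normalized with $\|u_{1,h}\|=1$ and $(u_{1,h},u)\ge0$.

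\textbf{Step 1 (energy identity).} Expanding the energy around $u$ and using \eqref{1.3} with $\|u\|=\|u_{1,h}\|=1$, we obtain
\begin{equation*}
2(E_{1,h}-E)=\la (A^u-\lambda)(u_{1,h}-u),u_{1,h}-u\ran+\int_\Omega \left(F(u_{1,h}^2)-F(u^2)-f(u^2)(u_{1,h}^2-u^2)\right)d\Omega .
\end{equation*}
This is the continuous analogue of \eqref{3.22}, with $\lambda$ in place of $\widetilde{\lambda}_h$. By Assumption (A2), $F$ is convex, so the integral term is nonnegative and can be dropped. It therefore suffices to show
\begin{equation*}
\la (A^u-\lambda)(u_{1,h}-u),u_{1,h}-u\ran\gtrsim h^2 .
\end{equation*}

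\textbf{Step 2 (reduction to an $H^1$ lower bound).} Since $\lambda$ is the simple lowest eigenvalue of the linear self-adjoint operator $A^u$ (Lemma \ref{lem1}), $A^u-\lambda$ is coercive in $H^1$ on $\mathrm{span}\{u\}^\perp$. Decompose
\begin{equation*}
u_{1,h}-u=\bigl(u_{1,h}-(u_{1,h},u)u\bigr)+\bigl((u_{1,h},u)-1\bigr)u .
\end{equation*}
Because $(A^u-\lambda)u=0$, only the first component contributes to the quadratic form, so
\begin{equation*}
\la (A^u-\lambda)(u_{1,h}-u),u_{1,h}-u\ran\gtrsim \|u_{1,h}-(u_{1,h},u)u\|_1^2 .
\end{equation*}
For the second component, normalization gives $1-(u_{1,h},u)=\frac12\|u_{1,h}-u\|^2$. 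Combined with $\|u-u_{1,h}\|\lesssim h^2$ (the $L^2$ rate from \cite{Eric2009}), this yields $\|u_{1,h}-(u_{1,h},u)u\|_1\ge \|u_{1,h}-u\|_1-O(h^4)$. The goal is thus reduced to
\begin{equation*}
\|u-u_{1,h}\|_1\gtrsim h .
\end{equation*}

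\textbf{Step 3 (saturation bound; the main obstacle).} Since $u_{1,h}$ is piecewise linear,
\begin{equation*}
\|\nabla(u-u_{1,h})\|^2\ge \inf_{v_h\in S_h}\|\nabla(u-v_h)\|^2\ge \sumT \inf_{p\in P_1(T)}\|\nabla(u-p)\|_T^2 .
\end{equation*}
On each $T$ the local infimum is $\|\nabla u-\overline{\nabla u}_T\|_T^2$, where $\overline{\nabla u}_T$ is the mean of $\nabla u$ over $T$. For a quasi-uniform, shape-regular family, a scaling argument combined with the Lebesgue differentiation theorem gives
\begin{equation*}
\|\nabla u-\overline{\nabla u}_T\|_T^2\ge c\,h_T^2\|D^2u\|_T^2-o(h_T^2)\|D^2u\|_T^2 .
\end{equation*}
The scaling step uses $u\in H^2$, which follows from convexity of $\Omega$. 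Summing over $T$ yields $\|\nabla(u-u_{1,h})\|^2\gtrsim h^2\|D^2u\|^2$.

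\textbf{Step 4 (nondegeneracy).} It remains to check $D^2u\not\equiv0$. If $D^2u\equiv0$, then $u$ would be affine; an affine function vanishing on $\partial\Omega$ is identically zero. This contradicts $\|u\|=1$ and $u>0$. Hence $\|u-u_{1,h}\|_1\gtrsim h$, and Steps 1--2 give $E_{1,h}-E\gtrsim h^2$.

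The delicate point is Step 3. It needs quasi-uniformity, so that $h_T\simeq h$, and it needs the scaling constant to be uniform over shape-regular element shapes. If the direct local argument is awkward, I would instead invoke the known saturation result of Lin--Xie--Xu for conforming linear elements, $\inf_{v_h}\|\nabla(u-v_h)\|\gtrsim h$ whenever $u\notin P_1$.
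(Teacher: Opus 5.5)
Your proof is correct and follows the paper's route. The paper combines the energy--error bound $\|u-u_{1,h}\|_1^2\lesssim E_{1,h}-E$ from Lemma~\ref{lem10} (Canc\`es--Chakir--Maday) with the Lin--Xie--Xu saturation bound $\|u-u_{1,h}\|_1\gtrsim h$ (the $\lesssim$ printed in its proof is a typo); your Steps 1--2 and 3--4 re-derive exactly these two ingredients.

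Two small repairs are needed:
\begin{itemize}
\item In Step 3, the bound cannot hold element by element when $D^2u$ oscillates inside $T$. State it after summation, e.g.\ $\|\nabla(u-u_{1,h})\|\ge c\,h\|\Pi_0D^2u\|-C\,h\|D^2u-\Pi_0D^2u\|$ for quasi-uniform meshes. Here $\Pi_0$ is the piecewise-constant $L^2$ projection, and $\|D^2u-\Pi_0D^2u\|\to0$.
\item $u\in H^2$ follows from convexity of $\Omega$ only when $\A$ is smooth enough (e.g.\ $\A=I$). For general $\A\in L^\infty$ it must be assumed, as the paper implicitly does.
\end{itemize}
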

\begin{proof}
	It follows from Theorem 2.1 in \cite{Lin2013c} that
	\begin{eqnarray*}
		\|u-u_{1,h}\|_1\lesssim h.
	\end{eqnarray*}
	Thus, we demonstrate
	\begin{eqnarray*}
		E_{1,h}-E\gtrsim \|u-u_{1,h}\|_1^2\gtrsim h^{2},
	\end{eqnarray*}
	which finished the proof.
\end{proof}

Then, we have the following result.
\begin{theo}
	Let $(\lambda_{1,h},u_{1,h})$ and $(\lambda_{2,h},u_{2,h})$ be the linear and quadratic Lagrange conforming element approximations for \eqref{1.3}, respectively. Then, when $h$ is sufficiently small, we have
	\begin{eqnarray*}
		0\le\overline{\lambda}_h-\lambda\lesssim h^2,
	\end{eqnarray*}
	where $\overline{\lambda}_h=\lambda_{1,h}+\frac{\beta}{2}(\|u_{2,h}\|_{L^4}-\|u_{1,h}\|_{L^4})$.
\end{theo}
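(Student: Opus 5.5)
The plan mirrors the proof of the lower bound theorem just above, with the roles of $E_h$ and $E_{1,h}$ exchanged and the energy inequality reversed. I read the post-processed quantity as $\overline{\lambda}_h=\lambda_{1,h}+\frac{\beta}{2}(\|u_{2,h}\|_{L^4}^4-\|u_{1,h}\|_{L^4}^4)$, with fourth powers, which is the version consistent with \eqref{4.1}--\eqref{4.2}.

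The first step is to start from \eqref{4.2} with $j=1$ and insert $\pm\frac{\beta}{2}\|u_{2,h}\|_{L^4}^4$. This gives
\begin{eqnarray*}
\lambda_{1,h}-\lambda=2(E_{1,h}-E)+\frac{\beta}{2}\big(\|u_{2,h}\|_{L^4}^4-\|u\|_{L^4}^4\big)+\frac{\beta}{2}\big(\|u_{1,h}\|_{L^4}^4-\|u_{2,h}\|_{L^4}^4\big).
\end{eqnarray*}
Moving the last term to the left-hand side produces the exact identity
\begin{eqnarray*}
\overline{\lambda}_h-\lambda=2(E_{1,h}-E)+\frac{\beta}{2}\big(\|u_{2,h}\|_{L^4}^4-\|u\|_{L^4}^4\big).
\end{eqnarray*}
The point of the post-processing is that the $O(h^2)$ discrepancy in the $L^4$ term of the linear element has been replaced by the $O(h^4)$ discrepancy of the quadratic element.

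The second step bounds the two terms. For the energy term, Lemma \ref{lem10} gives the upper estimate $E_{1,h}-E\lesssim h^2$, and Lemma \ref{lem11} gives the lower estimate $E_{1,h}-E\gtrsim h^2$; in particular the energy term is nonnegative, since $u_{1,h}$ is a conforming admissible function in the minimization \eqref{1.1}. For the quartic term, \eqref{4.3} with $j=2$ gives $\frac{\beta}{2}\big|\|u_{2,h}\|_{L^4}^4-\|u\|_{L^4}^4\big|\lesssim h^4$. Combining these,
\begin{eqnarray*}
C_1h^2-C_2h^4\le\overline{\lambda}_h-\lambda\le C_3h^2+C_2h^4.
\end{eqnarray*}
For $h$ sufficiently small, $C_1h^2-C_2h^4\ge0$, and the right-hand side is $\lesssim h^2$. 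This yields $0\le\overline{\lambda}_h-\lambda\lesssim h^2$.

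The main obstacle is the lower estimate $E_{1,h}-E\gtrsim h^2$, that is, Lemma \ref{lem11}. It needs a genuine lower bound $\|u-u_{1,h}\|_1\gtrsim h$ for the linear element, which the cited upper estimate does not supply. I would assume Lemma \ref{lem11} as stated. If it had to be justified, I would argue via a saturation-type lower bound for the best $H^1$ approximation by piecewise linears, valid when $u$ is not piecewise linear, combined with the energy equivalence $E_{1,h}-E\simeq\|u-u_{1,h}\|_1^2$ from Lemma \ref{lem10}. Everything else in the argument is bookkeeping.
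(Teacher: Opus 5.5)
Your proposal is correct and follows the paper's proof essentially line for line. Like the paper, you insert $\pm\frac{\beta}{2}\|u_{2,h}\|_{L^4}^4$ into \eqref{4.2}, then use Lemma \ref{lem10}, Lemma \ref{lem11} and \eqref{4.3} to set $h^2\lesssim E_{1,h}-E\lesssim h^2$ against an $O(h^4)$ quartic term. Your fourth-power reading and your sign in the identity are the right ones, since the paper's displays contain typos there. Your caution about Lemma \ref{lem11} is also justified: the needed input is a lower bound $\|u-u_{1,h}\|_1\gtrsim h$ of saturation type, as in \cite{Lin2013c}, not the upper bound the paper writes in that lemma's proof.
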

\begin{proof}
	It follows from \eqref{4.2} that
	\begin{eqnarray*}
		\lambda_{1,h}-\lambda&=2(E_{1,h}-E)+\frac{\beta}{2}(\|u_{1,h}\|_{L^4}^4-\|u\|_{L^4}^4)
		\\
		&=2(E_{1,h}-E)+\frac{\beta}{2}(\|u_{2,h}\|_{L^4}^4-\|u_h\|_{L^4}^4)-\frac{\beta}{2}(\|u_{2,h}\|_{L^4}^4-\|u_{1,h}\|_{L^4}^4),
	\end{eqnarray*}
	which leads to
	\begin{eqnarray*}
		\overline{\lambda}_h-\lambda= 2(E_{1,h}-E)+\frac{\beta}{2}(\|u\|_{L^4}^4-\|u_{2,h}\|_{L^4}^4).
	\end{eqnarray*}
	By Lemma \ref{lem10}, Lemma \ref{lem11} and \eqref{4.2} we obtain
	\begin{eqnarray*}
		h^2\lesssim E_{1,h}-E\lesssim h^2,\quad \frac{\beta}{2}(\|u_{2,h}\|_{L^4}^4-\|u\|_{L^4}^4)\lesssim h^4,
	\end{eqnarray*}
	which implies
	\begin{eqnarray*}
		0\le\overline{\lambda}_h-\lambda\lesssim h^2,
	\end{eqnarray*}
	when $h$ is sufficiently small. The proof is completed. 
\end{proof}

\section{Numerical experiments}In this section, we present some numerical examples to check the efficiency and lower bound property for the WG scheme \eqref{2.4}. In the following experiments, uniform triangular mesh is applied (see FIG \ref{pic1}).

\begin{figure}[ht]
	\centering
	\includegraphics[width=0.3\textwidth]{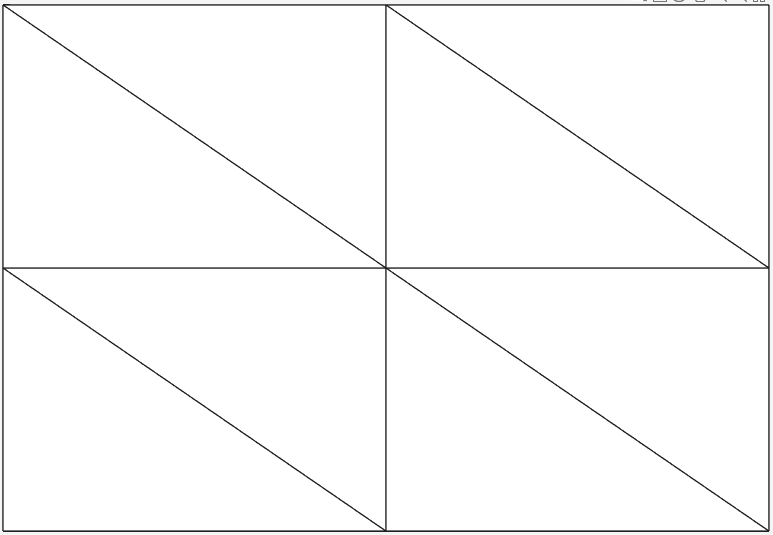}\quad
	\includegraphics[width=0.3\textwidth]{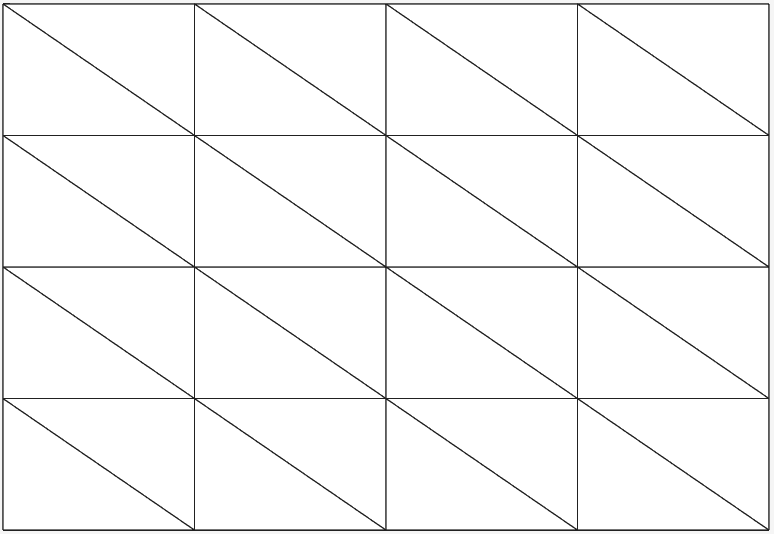}\quad
	\includegraphics[width=0.3\textwidth]{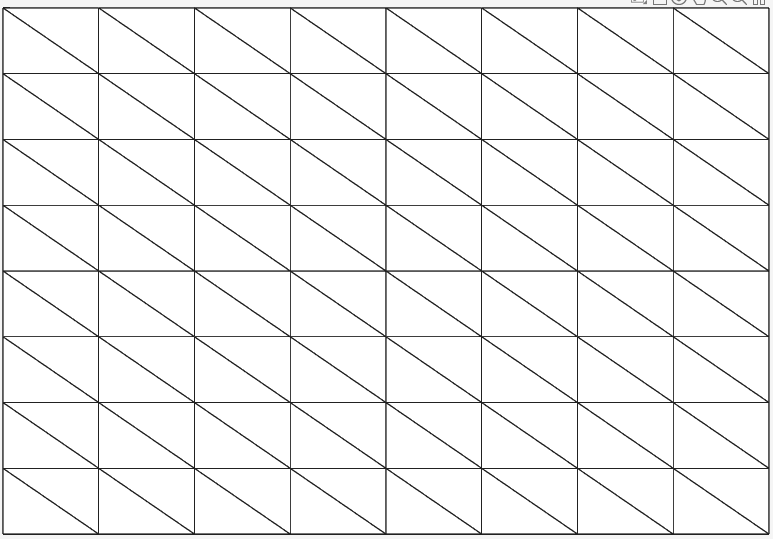}
	\caption{Uniform triangular mesh for unit square domain.}\label{pic1}
\end{figure}

Since the exact ground states are unknown, we compute the convergence rate by the following estimate
\begin{eqnarray*}
	Order\approx\lg\left(\frac{\lambda_h-\lambda_{\frac{h}{2}}}{\lambda_{\frac{h}{2}}-\lambda_{\frac{h}{4}}}\right)/\lg2.
\end{eqnarray*}

\begin{example}
	Consider the nonlinear eigenvalue problem \eqref{1.3} in the unit square domain $\Omega=(-8,8)^2$ with $V(x,y)=1$ and $f(u^2)=u^2$. The corresponding results are shown in Tables \ref{t1} -\ref{t2}.
\end{example}
\begin{table}[ht]
	\centering
	\caption{WG approximations with $k=1$.}
	\label{t1}
	\begin{tabular}{|c|c|c|c|c|c|}\hline
		$N$&16&32&64&128&Order\\ \hline\hline
		$\lambda$& 1.084390 & 1.085393 & 1.085648 & 1.085712 & \textbf{1.99} 
		\\ \hline
		$E$& 0.540057 & 0.540555 & 0.540681 & 0.540712 & \textbf{1.99} 
		\\ \hline
	\end{tabular}
\end{table}
\begin{table}[ht]
	\centering
	\caption{WG approximations with $k=2$.}
	\label{t2}
	\begin{tabular}{|c|c|c|c|c|}\hline
		$N$&16&32&64&Order\\ \hline\hline
		$\lambda$& 1.085730 & 1.085733 & 1.085733 & \textbf{4.01}
		\\ \hline
		$E$& 0.540722 & 0.540723 & 0.540723 & \textbf{4.01} 
		\\ \hline
	\end{tabular}
\end{table}

\begin{example}
	Consider the nonlinear eigenvalue problem \eqref{1.3} in the unit square domain $\Omega=(-1,1)^2$ with $V(x,y)=x^2+y^2$ and $f(u^2)=2 u^2$. The corresponding results are shown in Tables \ref{t3} -\ref{t4}.
\end{example}

\begin{table}[ht]
	\centering
	\caption{WG approximations with $k=1$.}
	\label{t3}
	\begin{tabular}{|c|c|c|c|c|c|}\hline
		$N$&16&32&64&128&Order\\ \hline\hline
		$\lambda$& 6.213447 & 6.278429 & 6.294926 & 6.299067 & \textbf{1.99} 
		\\ \hline
		$E$& 2.834190 & 2.866055 & 2.874147 & 2.876178 & \textbf{1.99}
		\\ \hline
	\end{tabular}
\end{table}
\begin{table}[ht]
	\centering
	\caption{WG approximations with $k=2$.}
	\label{t4}
	\begin{tabular}{|c|c|c|c|c|}\hline
		$N$&16&32&64&Order\\ \hline\hline
		$\lambda$& 6.300245 & 6.300435 & 6.300447 & \textbf{4.01} 
		\\ \hline
		$E$&2.876763 & 2.876850 & 2.876855 & \textbf{4.01} 
		\\ \hline
	\end{tabular}
\end{table}

\begin{example}
	Consider the nonlinear eigenvalue problem \eqref{1.3} in the unit square domain $\Omega=(-6,6)^2$ with $V(x,y)=x^2+y^2$ and $f(u^2)=\beta u^2$. We set $\beta$=20, 200 and 2000. The corresponding results are shown in Tables \ref{t5} -\ref{t6}.
\end{example}

\begin{table}[ht]
	\centering
	\caption{WG approximations with $k=1$.}
	\label{t5}
	\begin{tabular}{|c|c|c|c|c|c|}\hline
		$N$&16&32&64&128&Order\\ \hline\hline
		\multicolumn{6}{|c|}{$\beta=\textbf{20}$}\\ \hline
		$\lambda$& 4.075515 & 4.112630 & 4.123643 & 4.126529 & \textbf{1.93} 
		\\ \hline
		$E$&1.544138 & 1.578598 & 1.588763 & 1.591422 & \textbf{1.93}
		\\ \hline
		\multicolumn{6}{|c|}{$\beta=\textbf{200}$}\\ \hline
		$\lambda$& 11.502264 & 11.514207 & 11.518100 & 11.519150 & \textbf{1.89} 
		\\ \hline
		$E$&3.929779 & 3.941214 & 3.944705 & 3.945630 & \textbf{1.92} 
		\\ \hline
		\multicolumn{6}{|c|}{$\beta=\textbf{200}$}\\ \hline
		$\lambda$& 35.923572 & 35.944136 & 35.951404 & 35.953439 & \textbf{1.84} 
		\\ \hline
		$E$&11.980593 & 11.984349 & 11.985595 & 11.985935 & \textbf{1.87} 
		\\ \hline
	\end{tabular}
\end{table}
\begin{table}[ht]
	\centering
	\caption{WG approximations with $k=2$ and $\beta=20$.}
	\label{t6}
	\begin{tabular}{|c|c|c|c|c|}\hline
		$N$&16&32&64&Order\\ \hline\hline
		\multicolumn{5}{|c|}{$\beta=\textbf{20}$}\\ \hline
		$\lambda$& 4.125947 & 4.127392 & 4.127496 & \textbf{3.78} 
		\\ \hline
		$E$& 1.590830 & 1.592214 & 1.592312 & \textbf{3.82} 
		\\ \hline
		\multicolumn{5}{|c|}{$\beta=\textbf{200}$}\\ \hline
		$\lambda$& 11.518527 & 11.519441 & 11.519503 & \textbf{3.89} 
		\\ \hline
		$E$&3.945356 & 3.945898 & 3.945941 & \textbf{3.65} 
		\\ \hline
	\end{tabular}
\end{table}

\begin{example}
	Consider the nonlinear eigenvalue problem \eqref{1.3} in the unit square domain $\Omega=(-8,8)^2$ with $V(x,y)=x^2+y^2+8e^{-(x-1)^2-y^2}$ and $f(u^2)=400 u^2$. The corresponding results are shown in Tables \ref{t7}.
\end{example}

\begin{table}[ht!]
	\centering
	\caption{WG approximations with $k=1$.}
	\label{t7}
	\begin{tabular}{|c|c|c|c|c|c|}\hline
		$N$&16&32&64&128&Order\\ \hline\hline
		$\lambda$& 16.613239 & 16.624031 & 16.628343 & 16.629569 & \textbf{1.81} 
		\\ \hline
		$E$& 5.830354 & 5.843679 & 5.848679 & 5.850096 & \textbf{1.82}
		\\ \hline
	\end{tabular}
\end{table}

\section*{Statements and Declarations}
%%%%%%%%%%%%%%%%%%%%%%%%%%%%%%%%%%%%%%%%%%%%%%%%%%%%%%%%%%%%%

\smallskip
\noindent
\textbf{Funding}.
This work was partly supported by the Beijing Natural Science Foundation (Grant No. Z200003), by the National Natural Science Foundation of China (Grant Nos. 12331015, 12301475, 12271208 and 11901015), by the National Center for Mathematics and Interdisciplinary Science, Chinese Academy of Sciences.

\smallskip
\noindent
\textbf{Data Availability}.
The code used in this work is available from the corresponding author on reasonable request.

\smallskip
\noindent
\textbf{Competing Interests}.
The authors declare that they have no relevant financial or non-financial interests to disclose.
%============================================================================

\bibliography{library}
\bibliographystyle{siam}

\newpage

\end{document}